\documentclass[11pt,a4paper]{article}

\usepackage{amsmath}
\usepackage{amsthm}
\usepackage{amssymb}
\usepackage{cain_arxiv}

\usepackage{tikz}
\usetikzlibrary{matrix}

\theoremstyle{definition}
\newtheorem{definition}{Definition}[section]

\newtheorem{remark}[definition]{Remark}
\newtheorem{algorithm}[definition]{Algorithm}

\theoremstyle{plain}
\newtheorem{proposition}[definition]{Proposition}
\newtheorem{lemma}[definition]{Lemma}
\newtheorem{theorem}[definition]{Theorem}

\numberwithin{equation}{section}

\def\fullref#1#2{%
  \ifdefined\hyperref%
    {\hyperref[#2]{#1\space\penalty 200\relax\ref*{#2}}}%
  \else%
    {#1\space\penalty 200\relax\ref{#2}}%
  \fi%
}

\newcommand{\defterm}[1]{\textit{#1}}


\newcommand{\pres}[2]{\left\langle #1\:|\:#2 \right\rangle}
\newcommand{\cgen}[1]{#1^{\#}}

\newcommand{\nset}{\mathbb{N}}

\newcommand{\emptyword}{\varepsilon}
\newcommand{\rel}[1]{\mathcal{#1}}

\newcommand{\imreduces}{\rightarrow}
\newcommand{\reduces}{\rightarrow^*}
\newcommand{\posreduces}{\rightarrow^+}

\newcommand{\thue}{\leftrightarrow^*}

\newcommand{\rpad}{\delta_{\mathrm{R}}}
\newcommand{\lpad}{\delta_{\mathrm{L}}}

\DeclareMathOperator{\csw}{CS}

\newcommand{\rev}{\mathrm{rev}}

\newcommand{\lex}{\mathrm{lex}}

\newcommand{\bst}{\mathcal{T}}

\begin{document}

\title{Rewriting systems and biautomatic structures for Chinese,
  hypoplactic, and sylvester monoids}

\author{Alan J. Cain, Robert D. Gray, Ant\'{o}nio Malheiro}

\thanks{During the research that led to this paper, the first author was supported by an Investigador {\sc FCT}
  fellowship ({\sc IF}/01622/2013/{\sc CP}1161/{\sc CT}0001). This work was developed within the research activities of
  the Centro de \'{A}lgebra da Universidade de Lisboa, {\sc FCT} project {\sc PEst-OE}/{\sc MAT}/{\sc UI}0143/2014, and
  of the Departamento de Matem\'{a}tica da Faculdade de Ci\^{e}ncias e Tecnologia da Universidade Nova de Lisboa.}

\date{}

\maketitle

\address[AJC]{%
Centro de Matem\'{a}tica e Aplica\c{c}\~{o}es, \\
Faculdade de Ci\^{e}ncias e Tecnologia, \\
Universidade Nova de Lisboa, 2829--516 Caparica, Portugal
}
\email{%
a.cain@fct.unl.pt
}
\webpage{%
www.fc.up.pt/pessoas/ajcain/
}

\address[RDG]{%
School of Mathematics, University of East Anglia, \\
Norwich NR4 7TJ, United Kingdom
}
\email{%
Robert.D.Gray@uea.ac.uk
}

\address[AM]{%
Departamento de Matem\'{a}tica, Faculdade de Ci\^{e}ncias e Tecnologia, \\
Universidade Nova de Lisboa, 2829--516 Caparica, Portugal \\
\null\quad and \\
Centro de \'{A}lgebra da Universidade de Lisboa, \\
Av. Prof. Gama Pinto 2, 1649--003 Lisboa, Portugal 
}
\email{%
ajm@fct.unl.pt
}

\begin{dedication}
\textit{Dedicated to Stuart W. Margolis on the occasion of his 60th birthday}
\end{dedication} 

\begin{abstract}
This paper studies complete rewriting systems and biautomaticity for
three interesting classes of finite-rank homogeneous mon\-oids: Chinese
monoids, hypoplactic monoids, and sylvester mon\-oids. For Chinese
monoids, we first give new presentations via finite complete rewriting
systems, using more lucid constructions and proofs than those given
independently by Chen \& Qui and G\"{u}zel Karpuz; we then construct
biautomatic structures. For hypoplactic monoids, we construct finite
complete rewriting systems and biautomatic structures. For sylvester
monoids, which are not finitely presented, we prove that the standard
presentation is an infinite complete rewriting system, and construct
biautomatic structures. Consequently, the monoid algebras
corresponding to monoids of these classes are automaton algebras in
the sense of Ufnarovskij.
\end{abstract}


\section{Introduction}

The aim of this paper is to study whether certain homogeneous monoids
admit presentations via finite complete rewriting systems or are
biautomatic. The focus is on Chinese monoids, hypoplactic monoids, and
sylvester monoids of finite rank. All three classes of monoids are
related to Plactic monoids. In a previous paper \cite{cgm_plactic}, we
answered a question of Zelmanov by constructing biautomatic structures
and presentations via finite complete rewriting systems for Plactic
monoids of finite rank. The present paper is partly motivated by that
earlier work, but the techniques we use here are original. As we
discuss below, our results also have consequences for the study of the
corresponding monoid algebras.

The Chinese monoid was introduced by Duchamp \& Krob
\cite{duchamp_plactic}, as one of the (few) multi-homogeneous monoids
with the same multihomogeneous growth as the Plactic monoid. Cassaigne
et al.~\cite{cassaigne_chinese} made the first fundamental study of
the Chinese monoid, and the Chinese algebra has also been studied
\cite{chen_chinese,jaszunska_chineserank3,jaszunska_chinese}. The
result that finite-rank Chinese monoids are presented by finite
complete rewriting systems was obtained in the context of
Gr\"{o}bner--Shirshov bases by Chen \& Qiu \cite{chen_grobner}, and
later in the context of rewriting systems by G\"{u}zel Karpuz
\cite{guzelkarpuz_fcrs}. In both approaches the standard presentation
of the Chinese monoid is the starting point. Chen \& Qiu apply the
Shirshov algorithm; G\"{u}zel Karpuz applies the Knuth--Bendix
completion procedure. These procedures parallel each other
\cite{heyworth_rewriting} and consist of adding rewriting rules which
arise from the analysis of all possible overlaps. In both papers,
proving confluence of the resulting rewriting system by checking the
37 possible critical pairs is left as an exercise for the interested
reader. In \fullref{\S}{subsec:chinesefcrs}, we use different
generating sets to construct presentations via a finite complete
rewriting systems for Chinese monoids: we think these rewriting
systems are easier to understand and the proofs more elegant. We then
proceed to prove that finite-rank Chinese monoids are biautomatic in
\fullref{\S}{subsec:chinesebiauto}, exhibiting en route a left-handed
analogue of the algorithm of Cassaigne et
al.~\cite[\S~2.2]{cassaigne_chinese} for right-multiplying an element
of the Chinese monoid by a generator.

The hypoplactic algebra was introduced by Krob \& Thibon
\cite{krob_noncommutative} as a quotient of the Plactic algebra. The
fundamental study of the underlying hypoplactic monoid, which is a
quotient of the Plactic monoid, is due to Novelli
\cite{novelli_hypoplactic}. In \fullref{\S}{subsec:hypoplacticfcrs},
we give a neat construction and proof that finite-rank hypoplactic
monoids admit presentations via finite complete rewriting systems. In
\fullref{\S}{subsec:hypoplacticbiauto}, we prove that hypoplactic
monoids of finite rank are biautomatic.

The sylvester monoid was defined by Hivert, Novelli \&
Thibon~\cite{hivert_algebra} as an analogue of the plactic monoid
where Schensted's algorithm for insertion into Young tableaux (see
\cite[ch.~5]{lothaire_algebraic}) is replaced by insertion into a
binary search tree; from the sylvester monoid, they then recover the
Hopf algebra of planar binary trees defined by Loday \& Ronco
\cite{loday_hopf}. Finite-rank sylvester monoids are not finitely
presented, but in \fullref{\S}{subsec:sylvestercrs}, we show that the
standard presentations for finite-rank sylvester monoids form
(infinite) complete rewriting systems. In
\fullref{\S}{subsec:sylvesterbiautomatic}, we proceed to show that
sylvester monoids of finite rank are biautomatic.

The existence of finite complete rewriting systems for the Chinese and
hypoplactic monoids immediately implies the existence of finite
Gr\"{o}bner--Shirshov bases for the Chinese and hypoplactic algebras
\cite{heyworth_rewriting}. From the biautomaticity of the Chinese,
hypoplactic, and sylvester monoids, we immediately recover the
solvability of the word problem in quadratic time
\cite[Corollary~3.7]{campbell_autsg}. Furthermore, the biautomaticity
of these monoids implies that each admits a regular cross-section
\cite[Corollary~5.6]{campbell_autsg}, which in turn implies that the
corresponding monoid algebras are automaton algebras in the sense of
Ufarnovskij \cite{ufnarovskij_combinatorial}.

In related work \cite{cgm_homogeneous}, we give examples of
homogeneous and multihomogeneous monoids that do not admit finite
complete rewriting systems or biautomatic structures, and indeed we
show that the two notions are independent within the classes of
homogeneous and multihomogeneous monoids. Thus the results in this
paper are not simply consequences of more general results for
homogeneous or multi-homogeneous monoids.

Since Chinese, hypolactic, and sylvester monoids are biautomatic, they
have decidable conjugacy problem. (To be precise, one can decide the
$o$-conjugacy relation, define by Otto~\cite{otto_conjugacy}, using
reasoning similar to the group
case~\cite[Theorem~2.5.7]{epstein_wordproc}.)  The algorithm for
biautomatic monoids is exponential-time in general. An interesting
open question is whether one can improve this exponential bound for
\emph{homogeneous} biautomatic monoids. (However, there are
(non-biautomatic) homogeneous and even multihomogeneous monoids in
which conjugacy is undecidable \cite[Theorem~4.1]{cm_conjugacy}.)

\section{Preliminaries}

\subsection{Words and presentations}

We denote the empty word (over any alphabet) by $\emptyword$. For an
alphabet $A$, we denote by $A^*$ the set of all words over $A$. When
$A$ is a generating set for a monoid $M$, every element of $A^*$ can
be interpreted either as a word or as an element of $M$. For words
$u,v \in A^*$, we write $u=v$ to indicate that $u$ and $v$ are equal
as words and $u=_M v$ to denote that $u$ and $v$ represent the same
element of the monoid $M$. The length of $u \in A^*$ is denoted $|u|$,
and, for any $a \in A$, the number of symbols $a$ in $u$ is denoted
$|u|_a$.

For any relation $\rel{R}$ on $A^*$, the presentation
$\pres{A}{\rel{R}}$ defines [any monoid isomorphic to]
$A^*/\cgen{\rel{R}}$ , where $\cgen{\rel{R}}$ denotes the congruence
generated by $\rel{R}$.  The presentation $\pres{A}{\rel{R}}$ is
\defterm{homogeneous} (respectively, \defterm{multi-homogeneous}) if
for every $(u,v) \in \rel{R}$ and $a \in A$, we have $|u| = |v|$
(respectively, $|u|_a = |v|_a$). That is, in a homogeneous
presentation, defining relations preserve length; in a
multi-homogenous presentation, defining relations preserve the
numbers of each symbol. A monoid is \defterm{homogeneous}
(respectively, \defterm{multi-homogeneous}) if it admits a
\defterm{homogeneous} (respectively, \defterm{multi-homogeneous})
presentation.

Any total order $\leq$ on an alphabet $A$ induces a total order
$\leq_\lex$ on $A^*$, where $w \leq_\lex w'$ if and only if either $w$
is proper prefix of $w'$ or if $w=paq$, $w'=pbr$ and $a \leq b$ for
some $p,q,r \in A^*$, and $a,b \in A$. The order $\leq_\lex$ is the
\defterm{lexicographic order induced by $\leq$}. Notice that
$\leq_\lex$ is not a well-order, but that it is left compatible with
concatenation.

\subsection{String rewriting systems}
\label{subsec:srs}

A \defterm{string rewriting system}, or simply a \defterm{rewriting
  system}, is a pair $(A,\rel{R})$, where $A$ is a finite alphabet and
$\rel{R}$ is a set of pairs $(\ell,r)$, usually written $\ell
\imreduces r$, known as \defterm{rewriting rules} or simply
\defterm{rules}, drawn from $A^* \times A^*$. The single reduction
relation $\imreduces_{\rel{R}}$ is defined as follows: $u
\imreduces_{\rel{R}} v$ (where $u,v \in A^*$) if there exists a
rewriting rule $(\ell,r) \in \rel{R}$ and words $x,y \in A^*$ such
that $u = x\ell y$ and $v = xry$. That is, $u \imreduces_{\rel{R}} v$
if one can obtain $v$ from $u$ by substituting the word $r$ for a
subword $\ell$ of $u$, where $\ell \imreduces r$ is a rewriting
rule. The reduction relation $\reduces_{\rel{R}}$ is the reflexive and
transitive closure of $\imreduces_{\rel{R}}$. The process of replacing
a subword $\ell$ by a word $r$, where $\ell \imreduces r$ is a rule,
is called \defterm{reduction} by application of the rule $\ell
\imreduces r$; the iteration of this process is also called
reduction. A word $w \in A^*$ is \defterm{reducible} if it contains a
subword $\ell$ that forms the left-hand side of a rewriting rule in
$\rel{R}$; it is otherwise called \defterm{irreducible}.

The rewriting system $(A,\rel{R})$ is \defterm{finite} if both $A$ and
$\rel{R}$ are finite. The rewriting system $(A,\rel{R})$ is
\defterm{noetherian} if there is no infinite sequence $u_1,u_2,\ldots
\in A^*$ such that $u_i \imreduces_{\rel{R}} u_{i+1}$ for all $i \in
\nset$. That is, $(A,\rel{R})$ is noetherian if any process of
reduction must eventually terminate with an irreducible word. The
rewriting system $(A,\rel{R})$ is \defterm{confluent} if, for any
words $u, u',u'' \in A^*$ with $u \reduces_{\rel{R}} u'$ and $u
\reduces_{\rel{R}} u''$, there exists a word $v \in A^*$ such that $u'
\reduces_{\rel{R}} v$ and $u'' \reduces_{\rel{R}} v$. A rewriting
system that is both confluent and noetherian is \defterm{complete}. If
$(A,\rel{R})$ is a complete rewriting system, then for every word $u$
there is a unique irreducible word $w$ such that $u \reduces_{\rel{R}}
w$; this word is called the \defterm{normal form} of $u$.

The rewriting system $(A,\rel{R})$ is \defterm{globally finite} if,
for each $w \in A^*$, there are only finitely many words $w'$ such
that $w \reduces_{\rel{R}} w'$. It is \defterm{acyclic} if there is no
word $w$ such that $w \posreduces_{\rel{R}} w$. An acyclic globally
finite rewriting system is noetherian \cite[Lemma
  2.2.5]{baader_termrewriting}.

The \defterm{Thue congruence} $\thue_{\rel{R}}$ is the equivalence
relation generated by $\imreduces_{\rel{R}}$. The elements of the
monoid presented by $\pres{A}{\rel{R}}$ are the
$\thue_{\rel{R}}$-equivalence classes. If $(A,\rel{R})$ is complete,
then the language of normal form words forms a cross-section of the
monoid: that is, each element of the monoid presented by
$\pres{A}{\rel{R}}$ has a unique normal form representive.

\subsection{Automaticity and biautomaticity}

This subsection contains the definitions and basic results from the
theory of automatic and biautomatic monoids needed hereafter. For
further information on automatic semigroups,
see~\cite{campbell_autsg}. We assume familiarity with basic notions of
automata and regular languages (see, for example,
\cite{hopcroft_automata}) and transducers and rational relations (see,
for example, \cite{berstel_transductions}).

\begin{definition}
Let $A$ be an alphabet and let $\$$ be a new symbol not in $A$. Define
the mapping $\rpad : A^* \times A^* \to ((A\cup\{\$\})\times (A\cup
\{\$\}))^*$ by
\[
(u_1\cdots u_m,v_1\cdots v_n) \mapsto
\begin{cases}
(u_1,v_1)\cdots(u_m,v_n) & \text{if }m=n,\\
(u_1,v_1)\cdots(u_n,v_n)(u_{n+1},\$)\cdots(u_m,\$) & \text{if }m>n,\\
(u_1,v_1)\cdots(u_m,v_m)(\$,v_{m+1})\cdots(\$,v_n) & \text{if }m<n,
\end{cases}
\]
and the mapping $\lpad : A^* \times A^* \to ((A\cup\{\$\})\times (A\cup \{\$\}))^*$ by
\[
(u_1\cdots u_m,v_1\cdots v_n) \mapsto
\begin{cases}
(u_1,v_1)\cdots(u_m,v_n) & \text{if }m=n,\\
(u_1,\$)\cdots(u_{m-n},\$)(u_{m-n+1},v_1)\cdots(u_m,v_n) & \text{if }m>n,\\
(\$,v_1)\cdots(\$,v_{n-m})(u_1,v_{n-m+1})\cdots(u_m,v_n) & \text{if }m<n,
\end{cases}
\]
where $u_i,v_i \in A$.
\end{definition}

\begin{definition}
\label{def:autstruct}
Let $M$ be a monoid. Let $A$ be a finite alphabet representing a set
of generators for $M$ and let $L \subseteq A^*$ be a regular language such
that every element of $M$ has at least one representative in $L$.  For
each $a \in A \cup \{\emptyword\}$, define the relations
\begin{align*}
L_a &= \{(u,v): u,v \in L, {ua} =_M {v}\}\\
{}_aL &= \{(u,v) : u,v \in L, {au} =_M {v}\}.
\end{align*}
The pair $(A,L)$ is an \defterm{automatic structure} for $M$ if
$L_a\rpad$ is a regular languages over $(A\cup\{\$\}) \times
(A\cup\{\$\})$ for all $a \in A \cup \{\emptyword\}$. A monoid $M$ is
\defterm{automatic} if it admits an automatic structure with respect to
some generating set.

The pair $(A,L)$ is a \defterm{biautomatic structure} for $M$ if
$L_a\rpad$, ${}_aL\rpad$, $L_a\lpad$, and ${}_aL\lpad$ are regular
languages over $(A\cup\{\$\}) \times (A\cup\{\$\})$ for all $a \in A
\cup \{\emptyword\}$. A monoid $M$ is \defterm{biautomatic} if it
admits a biautomatic structure with respect to some generating
set. [Note that biautomaticity implies automaticity.]
\end{definition}

Unlike the situation for groups, biautomaticity for monoids and
semigroups, like automaticity, is dependent on the choice of
generating set \cite[Example~4.5]{campbell_autsg}. However, for
monoids, biautomaticity and automaticity are independent of the choice
of \emph{semigroup} generating sets \cite[Theorem~1.1]{duncan_change}.

Hoffmann \& Thomas have made a careful study of biautomaticity for
semigroups \cite{hoffmann_biautomatic}. They distinguish four notions
of biautomaticity for semigroups:
\begin{itemize}

\item \defterm{right-biautomaticity}, where $L_a\rpad$ and ${}_aL\rpad$ are
  regular languages;

\item \defterm{left-biautomaticity}, where $L_a\lpad$ and ${}_aL\lpad$ are
  regular languages;

\item \defterm{same-biautomaticity}, where $L_a\rpad$ and ${}_aL\lpad$ are
  regular languages;

\item \defterm{cross-biautomaticity}, where ${}_aL\rpad$ and $L_a\lpad$ are
  regular languages.

\end{itemize}
These notions are all equivalent
for groups and more generally for cancellative semigroups
\cite[Theorem~1]{hoffmann_biautomatic} but distinct for semigroups
\cite[Remark~1 \& \S~4]{hoffmann_biautomatic}. In the sense used in
this paper, `biautomaticity' implies \emph{all four} notions of
biautomaticity above.




\subsection{Rational relations}

In proving that $R\rpad$ or $R\lpad$ is regular, where $R$ is a
relation on $A^*$, a useful strategy is to prove that $R$ is a
rational relation (that is, a relation recognized by a finite
transducer \cite[Theorem~6.1]{berstel_transductions}) and then apply
the following result, which is a combination of
\cite[Corollary~2.5]{frougny_synchronized} and
\cite[Proposition~4]{hoffmann_biautomatic}:

\begin{proposition}
\label{prop:rationalbounded}
If $R \subseteq A^* \times A^*$ is rational relation and there is a
constant $k$ such that $\bigl||u|-|v|\bigr| \leq k$ for all $(u,v) \in
R$, then $R\rpad$ and $R\lpad$ are regular.
\end{proposition}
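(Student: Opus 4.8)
The plan is to prove that $R\rpad$ is regular for every rational relation $R$ of bounded length difference, and then to deduce the statement for $R\lpad$ by a reversal argument. For the first part I would start from a finite transducer $T=(Q,\ldots)$ recognising $R$, and normalise it by splitting and contracting transitions so that each transition reads exactly one letter on exactly one of the two tapes. With this normalisation, reading a single transition changes the \emph{balance} of a run --- the number of letters consumed on the first tape minus the number consumed on the second --- by exactly $\pm 1$.

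The key lemma is that the balance stays bounded along every accepting run. I would prove this by a pumping argument. Put $N=|Q|$, and suppose some accepting run attains balance exceeding $N$ at an intermediate vertex. The prefix of the run up to that vertex then has length greater than $N$, hence contains a cycle; deleting cycles until the prefix becomes a simple path lowers its length to at most $N$, and hence its balance to at most $N$, so at least one deleted cycle has strictly positive balance. That cycle lies on an accepting run, so pumping it produces accepted pairs $(u,v)$ with $|u|-|v|$ growing without bound, contradicting the hypothesis $\bigl||u|-|v|\bigr| \leq k$. The symmetric argument bounds the balance below, so along accepting runs the balance lies in $[-N,N]$; this bounded-delay property is essentially \cite[Corollary~2.5]{frougny_synchronized}.

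Given this bounded delay, I would build a finite automaton recognising $R\rpad$ that simulates $T$ synchronously: its states record the current state of $T$ together with a buffer holding the surplus letters of whichever tape $T$ has advanced further along. Because the balance is bounded by $N$, the buffer never exceeds length $N$, so the state set is finite; on reading a symbol $(x_i,y_i)\in (A\cup\{\$\})\times(A\cup\{\$\})$ the automaton feeds the two components into the respective tapes and advances $T$ as far as the buffered letters permit, using the trailing padding symbols $\$$ to flush the buffer at the end of the input. This automaton accepts exactly the words $(u,v)\rpad$ with $(u,v)\in R$, so $R\rpad$ is regular; together with the bound on $\bigl||u|-|v|\bigr|$ this recovers \cite[Proposition~4]{hoffmann_biautomatic}.

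Finally, for $R\lpad$ I would exploit the symmetry between the two paddings under reversal. Writing $R^{\rev}=\{(\rev u,\rev v):(u,v)\in R\}$, a direct check of the three cases in the definition of the padding maps gives the identity $(u,v)\lpad = \rev\bigl((\rev u,\rev v)\rpad\bigr)$, where the outer $\rev$ reverses the sequence of letter-pairs. Since reversal sends rational relations to rational relations and preserves lengths, $R^{\rev}$ is again a rational relation of bounded length difference, so the first part gives that $R^{\rev}\rpad$ is regular; hence $R\lpad = \rev(R^{\rev}\rpad)$ is regular, being the image of a regular language under reversal over the alphabet $(A\cup\{\$\})\times(A\cup\{\$\})$. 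I expect the main obstacle to be the bounded-delay lemma: turning the \emph{global} bound $\bigl||u|-|v|\bigr|\leq k$ on accepted pairs into a uniform bound on the balance \emph{along} runs is where the real work lies, and it is exactly the step that fails for rational relations of unbounded length difference.
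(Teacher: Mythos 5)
Your proposal is correct, but it takes a different route from the paper: the paper does not prove this proposition at all, instead obtaining it as a direct combination of two cited results, namely \cite[Corollary~2.5]{frougny_synchronized} (a rational relation of bounded length difference is synchronized, so its right-padded version is regular) and \cite[Proposition~4]{hoffmann_biautomatic} (passing from $\rpad$ to $\lpad$). What you have done is supply self-contained proofs of both ingredients. Your bounded-delay lemma is the real content of the Frougny--Sakarovitch result, and your pumping argument for it is sound: normalising the transducer so each transition advances exactly one tape by one letter, a prefix of balance exceeding $N=|Q|$ must contain a cycle of strictly positive balance (the residual simple path has balance at most $N-1$, so the deleted cycles have positive total balance), and pumping that cycle within the accepting run drives $|u|-|v|$ past $k$; hence the balance lies in $[-N,N]$ along every accepting run, and the synchronous buffer automaton (rejecting any branch whose buffer would exceed $N$, which loses no accepting run) recognises $R\rpad$. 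Your reversal identity $(u,v)\lpad = \mathrm{rev}\bigl((u^\rev,v^\rev)\rpad\bigr)$ checks out against the case analysis in the definitions of $\rpad$ and $\lpad$, and since rationality, the length-difference bound, and regularity are all preserved under reversal, the $\lpad$ case follows; this is exactly the content of the Hoffmann--Thomas step. You also correctly identified where the real work lies: converting the global bound on accepted pairs into a uniform bound on balance along runs, which is precisely what fails for rational relations of unbounded length difference. The trade-off is the usual one: the paper's citation is shorter and defers to the literature, while your argument is longer but makes the synchronization mechanism explicit and keeps the proof elementary and self-contained.
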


\begin{remark}
When constructing transducers to recognize particular relations, we
will make use of certain strategies.

One strategy will be to consider a transducer reading elements of a
relation $R$ from \emph{right to left}, instead of (as usual) left to
right. In effect, such a transducer recognizes the reverse of $R$,
which is the relation
\[
R^\rev = \{(u^\rev,v^\rev) : (u,v) \in R\},
\] 
where $u^\rev$ and $v^\rev$ are the reverses of the words $u$ and $v$
respectively. Since the class of rational relations is closed under
reversal \cite[p.65--66]{berstel_transductions}, constructing such a
(right-to-left) transducer suffices to show that $R$ is a rational
relation.

Another important strategy will be for the transducer to
non-det\-er\-min\-is\-tic\-al\-ly guess some symbol yet to be read. More exactly,
the transducer will non-deterministically select a symbol and store it
in its state. When it later reads the relevant symbol, it checks it
against the stored guessed symbol. If the guess was correct, the
transducer continues. If the guess was wrong, the transducer enters a
failure state. Similarly, the transducer can non-deterministically
guess that it has reached the end of its input and enter an accept
state. If it subsequently reads another symbol, it knows that its
guess was wrong, and it enters a failure state.
\end{remark}

\section{Chinese monoid}

\subsection{Staircases}

Let $n \in \nset$. Let $A$ be the finite ordered alphabet $\{1 < 2 <
\ldots < n\}$. Let $\rel{R}$ be the set of defining relations
\begin{align}
\bigl\{(zyx,{}&zxy), \label{eq:chineserela} \\
  (zxy,{}&yzx) : x \leq y\leq z\bigr\}.\label{eq:chineserelb}
\end{align}
Then the \defterm{Chinese monoid of rank $n$}, denoted $C_n$, is presented by
$\pres{A}{\rel{R}}$.

Cassaigne et al.~\cite[\S~2]{cassaigne_chinese} give a set of normal
forms for elements of the Chinese monoid. They point out that every
element has a unique representative of the form
\begin{equation}
\ell^{(1)} \ell^{(2)} \ell^{(3)}\cdots \ell^{(n)} \label{eq:chinesenf}
\end{equation}
with
\begin{equation}
\label{eq:chineserow}
\ell^{(k)} = (k1)^{\sigma_{k1}}(k2)^{\sigma_{k2}}\cdots (k(k-1))^{\sigma_{k(k-1)}}k^{\sigma_k},
\end{equation}
where the exponents $\sigma_{kj}$ and $\sigma_k$ lie in $\nset^0$. (Notice that in
\eqref{eq:chineserow}, $k-1$ is a single symbol.)  Cassaige et
al.\ arrange the exponents $\sigma_{kj}$ and $\sigma_k$ in a
\defterm{Chinese staircase}, which is an analogue of the planar
representation of a tableau for the Plactic monoid (see
\cite[ch.~5]{lothaire_algebraic}). For example, in the case $n=4$ the
$\sigma_{ki}$ and $\sigma_k$ are arranged as follows:

\medskip
{\centering
\begin{tikzpicture}
\matrix (staircase) [
matrix of math nodes,
nodes={rectangle,draw,minimum width=9mm,minimum height=9mm},
row sep={between borders,-\pgflinewidth},
column sep={between borders,-\pgflinewidth},
]
{
          &             &             & \sigma_{1} \\
          &             & \sigma_{2}  & \sigma_{21} \\
          & \sigma_{3}  & \sigma_{32} & \sigma_{31} \\
 \sigma_4 & \sigma_{43} & \sigma_{42} & \sigma_{41} \\
};
\path (staircase-1-4.east) node[anchor=west] {$1$};
\path (staircase-2-4.east) node[anchor=west] {$2$};
\path (staircase-3-4.east) node[anchor=west] {$3$};
\path (staircase-4-4.east) node[anchor=west] {$4$};
\path (staircase-4-1.south) node[anchor=north] {$4$};
\path (staircase-4-2.south) node[anchor=north] {$3$};
\path (staircase-4-3.south) node[anchor=north] {$2$};
\path (staircase-4-4.south) node[anchor=north] {$1$};
\end{tikzpicture}
\par}
\medskip

Notice that rows are indexed from top to bottom and columns from right
to left.  Because the $k$-th row of the staircase contains the
exponents for the word $\ell^{(k)}$, each such word $\ell^{(k)}$ (see
\eqref{eq:chineserow}) is called a \defterm{Chinese row}. A normal
form word \eqref{eq:chinesenf} is called a \defterm{Chinese staircase
  word}.

\subsection{Finite complete rewriting system}
\label{subsec:chinesefcrs}

As noted in the introduction, Chen \& Qiu \cite{chen_grobner} and
G\"{u}zel Karpuz \cite{guzelkarpuz_fcrs} independently constructed
finite complete rewriting systems for finite-rank Chinese
monoids. However, proving confluence of these systems relies on
checking that all 37 possible critical pairs resolve. In this section
we present an alternative finite complete rewriting system. By
changing the generating set, we obtain a rewriting system whose
language of irreducible words corresponds closely to the Chinese
staircase words. Once we prove that the rewriting system is
noetherian, we can deduce confluence quickly from the fact that
Chinese staircase words form a set of unique representatives for the
monoid.

Let \[D=\{d_{\alpha\beta}: \alpha, \beta\in A, \alpha >\beta \}\cup
\{d_{\alpha}: \alpha\in A\}.\] The idea is that symbols
$d_{\alpha\beta}$ and $d_\alpha$ represent, respectively, the elements
$\alpha \beta$ and $\alpha$ of $C_n$. Therefore the original
generating set $A$ is essentially included in this new set, and so $D$
also generates $C_n$.

Consider a Chinese staircase word \eqref{eq:chinesenf}. By replacing
each Chinese row $\ell^{(k)}$ (as in \eqref{eq:chineserow}) by
\begin{equation*}
d_{k1}^{\sigma_{k1}}d_{k2}^{\sigma_{k2}}\cdots d_{k(k-1)}^{\sigma_{k(k-1)}}d_k^{\sigma_k},
\end{equation*}
we obtain a word over the generating set $D$ representing the same
element of $C_n$; we call such a word a \defterm{$D$-Chinese staircase
  word}. Notice that we can start from a $D$-Chinese staircase word
and recover the original Chinese staircase word by simply reading off
the subscripts of the symbols in $D$. Thus there is a one-to-one
correspondence between Chinese staircase words and $D$-Chinese
staircase words, and so the set of $D$-Chinese staircase words is also
a language of unique normal forms for $C_n$.

Define a relation $\preceq$ on the alphabet $D$ as follows: for $d,d'
\in D$, we have $d \preceq d'$ if and only if $dd'$ is a $D$-Chinese
staircase word. As an immediate consequence of this definition, notice
that $d \preceq d'$ if and only if $d$ and $d'$ can appear in that
order, not necessarily as adjacent symbols, in a $D$-Chinese staircase
word. Thus $\preceq$ is a total order.

Now define a set of rewriting rules $\rel{T}$ on $D^*$ as follows:
\[
\rel{T}=\{dd'\rightarrow \csw_D(dd'): d \succ d'\},
\]
where $\csw_D(w)$ denotes the unique $D$-Chinese staircase word that
equals $dd'$ in $C_n$.  It is obvious from the definition that
$\rel{T}$ is finite and each rewriting rule in $\rel{T}$ holds in the
Chinese monoid $C_n$.

The next lemma is immediate from the definition of $\rel{T}$.

\begin{lemma}
\label{lem:irredchinese}
The irreducible words of the rewriting system $(D,\rel{T})$ are the
$D$-Chinese staircase words.
\end{lemma}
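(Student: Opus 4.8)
The plan is to unpack the definition of $\rel{T}$ and show that a word over $D$ is irreducible precisely when it is a $D$-Chinese staircase word, by establishing both inclusions. First I would recall that the rules of $\rel{T}$ are exactly the pairs $dd' \rightarrow \csw_D(dd')$ where $d \succ d'$; hence the left-hand sides of the rules are precisely the length-two words $dd'$ with $d \succ d'$ in the total order $\preceq$. A word $w \in D^*$ is reducible if and only if it contains some such factor $dd'$ as a subword, i.e. if and only if it contains two consecutive symbols that are out of $\preceq$-order.

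The forward inclusion is the statement that every irreducible word is a $D$-Chinese staircase word. I would argue the contrapositive: if $w = e_1 e_2 \cdots e_m$ (with each $e_i \in D$) is not a $D$-Chinese staircase word, then by the characterization of $\preceq$ recorded in the text---namely that $d \preceq d'$ holds exactly when $d$ and $d'$ can appear in that order in a $D$-Chinese staircase word---the word $w$ must contain a consecutive pair $e_i e_{i+1}$ with $e_i \succ e_{i+1}$, so $e_i e_{i+1}$ is the left-hand side of a rule and $w$ is reducible. The key point making this clean is that $\preceq$ is a \emph{total} order on $D$ and that being a $D$-Chinese staircase word is equivalent to the whole sequence $e_1 \preceq e_2 \preceq \cdots \preceq e_m$ being nondecreasing; I would want to note explicitly that a word is $\preceq$-nondecreasing throughout if and only if every adjacent pair is $\preceq$-ordered, which is just the transitivity of the total order.

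For the reverse inclusion, I would show every $D$-Chinese staircase word is irreducible: if $w$ is a $D$-Chinese staircase word, then every two of its symbols appear in that order in a $D$-Chinese staircase word, so for each consecutive pair $e_i e_{i+1}$ we have $e_i \preceq e_{i+1}$, whence no factor $e_i e_{i+1}$ is the left-hand side of a rule in $\rel{T}$; since all left-hand sides have length two, no longer factor can be a left-hand side either, and $w$ is irreducible.

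I do not expect any genuine obstacle here, which is consistent with the lemma being described as immediate. The only subtlety worth stating carefully is the bridge between the ``adjacent symbols'' condition built into the reduction relation $\imreduces_{\rel{T}}$ and the ``appearing in that order, not necessarily adjacently'' characterization of $\preceq$; the resolution is precisely that $\preceq$ is a total order, so pairwise-adjacent comparability propagates to global monotonicity. Thus the proof reduces to observing that $D$-Chinese staircase words are exactly the $\preceq$-nondecreasing words and that reducibility is exactly the presence of a $\preceq$-descent.
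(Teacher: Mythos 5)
Your proof is correct and takes essentially the same approach as the paper, which simply declares the lemma immediate from the definition of $\rel{T}$: your write-up is the natural unpacking of that remark, identifying reducibility with the presence of an adjacent $\preceq$-descent and $D$-Chinese staircase words with the $\preceq$-nondecreasing words. The one subtlety you flag---that transitivity of the total order $\preceq$ bridges adjacent-pair comparability and global monotonicity---is exactly the implicit content of the paper's assertion, so nothing further is needed.
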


As a consequence we deduce that the presentation $\pres{D}{\rel{T}}$
defines the Chinese monoid $C_n$. In the next lemma we show that
$(D,\rel{T})$ is noetherian, and thus it will follow from the previous
lemma that $(D,\rel{T})$ is complete. In order to show termination of
$\rel{T}$ we will fully characterize the relation $\succ$ on
$D$. 

Let $\alpha,\beta,\gamma,\delta\in A$. From the definition of
$\preceq$ and the form of $D$-Chinese staircase words, we have:
\begin{align*}
d_\alpha \succ d_\beta &\iff \alpha >\beta;\\
d_{\alpha\beta}\succ d_{\gamma} &\iff \alpha >\gamma;\\
d_\alpha \succ d_{\beta\gamma} &\iff \alpha \geq \beta;\\
d_{\alpha\beta} \succ d_{\gamma\delta} &\iff (\alpha >\gamma) \vee (\alpha=\gamma \wedge \beta > \delta).
\end{align*}
Each of the left hand sides of a rewriting rule in $\rel{T}$
corresponds to one of the above cases. In the proof of the next lemma
we will see the resulting Chinese staircase words for each of the left
hand sides in $\rel{T}$.

\begin{lemma}
\label{lem:chinesenoetherian}
The rewriting system $\rel{T}$ is noetherian.
\end{lemma}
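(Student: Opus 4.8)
The plan is to prove noetherianity by exhibiting a well-founded measure on $D^*$ that strictly decreases under every rewriting rule in $\rel{T}$. The key observation is that each rule $dd' \to \csw_D(dd')$ with $d \succ d'$ takes a two-symbol word that is "out of order" and replaces it with its $D$-Chinese staircase normal form. To find a suitable measure, I would first examine, for each of the four cases in the characterization of $\succ$ on $D$, exactly what $\csw_D(dd')$ looks like.

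**First I would compute the right-hand sides explicitly.** Recall that $d_{\alpha\beta}$ represents $\alpha\beta$ and $d_\alpha$ represents $\alpha$ in $C_n$, and that in a $D$-Chinese staircase word the symbols are arranged by row (decreasing first index, reading top to bottom) and, within a row, by the defining order on $\preceq$. For each out-of-order pair I would use the Chinese relations \eqref{eq:chineserela} and \eqref{eq:chineserelb} to rewrite the underlying $A$-word into its Chinese staircase form, then translate back into $D$. For instance, $d_\alpha d_\beta$ with $\alpha > \beta$ represents $\alpha\beta$ with $\alpha > \beta$, which as a Chinese row is $(\alpha\beta)\cdot\emptyword$, giving $\csw_D(d_\alpha d_\beta) = d_{\alpha\beta}$; this rule strictly decreases length. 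The mixed cases involving one single-index and one double-index symbol, and the case of two double-index symbols, produce right-hand sides that are length-preserving or length-decreasing but always rearrange the symbols into staircase order.

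**The main obstacle** will be the length-preserving rules, where a simple length count does not decrease. For these I expect to need a lexicographic or weighted measure that reflects how far a word is from staircase order. A natural candidate is to assign to each symbol of $D$ a weight recording its "position" in the staircase order $\preceq$ (for example, via the pair of indices, ordered so that earlier-occurring symbols get smaller weight), and then take as the measure of a word the number of "inversions" — pairs of positions $(i,j)$ with $i<j$ but the symbol at $i$ succeeding the symbol at $j$ under $\succ$ — possibly combined lexicographically with the word length to handle the length-decreasing rules uniformly. I would verify that applying any rule either shortens the word (handling the non-confluent-looking cases immediately) or strictly reduces the inversion count while not increasing length, so that the pair $(\,|w|,\ \#\text{inversions}(w)\,)$ decreases in the lexicographic order on $\nset \times \nset$, which is well-founded.

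**Once the measure is established**, noetherianity follows because no infinite strictly decreasing sequence exists in a well-founded order, so every reduction sequence terminates. I would conclude by combining this with \fullref{Lemma}{lem:irredchinese}: since $(D,\rel{T})$ is noetherian and its irreducible words are exactly the $D$-Chinese staircase words, which form a cross-section for $C_n$, the system is automatically confluent and hence complete. The delicate point throughout is ensuring the inversion-based measure genuinely drops under the length-preserving rearrangement rules; verifying this amounts to checking that each such rule moves symbols strictly toward staircase order, which the explicit computation of the right-hand sides in the four cases should make transparent.
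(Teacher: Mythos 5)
There is a genuine gap, and it is located exactly where you flagged the ``delicate point'': your factual claim that, apart from $d_\alpha d_\beta \imreduces d_{\alpha\beta}$, all right-hand sides are length-preserving or length-decreasing is false, and the proposed measure collapses with it. In the case $d_{\alpha\beta} \succ d_{\gamma\delta}$ with $\alpha > \gamma$ and $\beta = \gamma$ (so $\alpha > \beta = \gamma > \delta$), one computes $\csw_D(d_{\alpha\beta}d_{\gamma\delta}) = d_\beta^2\, d_{\alpha\delta}$, a word of length $3$ replacing a word of length $2$: what is conserved is the length of the underlying $A$-word read off the subscripts, not the $D$-length. So the first coordinate of your pair $(|w|,\#\mathrm{inversions}(w))$ can \emph{increase}. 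Reversing the coordinates does not rescue the argument, because the inversion count need not strictly drop when this rule fires inside a context: taking $\alpha > \epsilon > \beta > \delta$ (e.g.\ $4>3>2>1$), the word $d_\epsilon d_{\alpha\beta} d_{\beta\delta}$ has exactly two inversions (namely $d_\epsilon \succ d_{\beta\delta}$, since $\epsilon \geq \beta$, and $d_{\alpha\beta} \succ d_{\beta\delta}$), and it rewrites to $d_\epsilon d_\beta d_\beta d_{\alpha\delta}$, which again has exactly two inversions ($d_\epsilon \succ d_\beta$, counted twice), while the length grows from $3$ to $4$. Hence neither $(|w|,\mathrm{inv}(w))$ nor $(\mathrm{inv}(w),|w|)$ decreases lexicographically under $\imreduces_{\rel{T}}$, and your well-founded-measure plan fails as stated.

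The paper's proof takes a different and more robust route that you should compare with. It carries out the same explicit case analysis of $\csw_D(dd')$ that you sketch, but the quantity it tracks is not numerical: it shows that every rule satisfies $w_1 \succ_\lex w_2$ for the lexicographic order on $D^*$ induced by $\succ$, which is left compatible with concatenation. This survives the length-increasing rule above because the comparison is decided at the first letter ($d_{\alpha\beta} \succ d_\beta$ since $\alpha > \beta$), irrespective of what follows. Of course, $\succ_\lex$ is not well-founded (as the paper notes in the preliminaries), so lexicographic decrease alone does not give termination; the missing ingredient is supplied by homogeneity: since the presentation \eqref{eq:chineserela}--\eqref{eq:chineserelb} preserves $A$-length, the subscript word's length is an invariant of rewriting, so each element of $C_n$ has only finitely many representatives in $D^*$ and the system is globally finite; as no word satisfies $w \succ_\lex w$, the system is also acyclic, and an acyclic globally finite rewriting system is noetherian. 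If you wish to keep a measure-flavoured formulation, the correct repair is essentially this argument in disguise: the $A$-weight of a $D$-word is invariant, there are only finitely many $D$-words of each fixed $A$-weight, and on any finite set the strict order $\succ_\lex$ is vacuously well-founded.
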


\begin{proof}
We will show that, for any $w,w'\in D^*$, if $w\imreduces_{\rel{T}}w'$
then $w \succ_\lex w'$. Since $\succ_\lex$ is left compatible with
concatenation, it suffices to show that $(w_1,w_2)\in {\rel{T}}$
implies $w_1\succ_\lex w_2$.

So let $(w_1,w_2)\in {\rel{T}}$. Depending on the form of $w_1$ we
divide the proof into four cases:
\begin{enumerate}

\item $w_1=d_\alpha d_\beta$ with $d_\alpha\succ d_\beta$.  Then
  $\alpha >\beta$. Therefore $\csw_D(d_\alpha d_\beta) = d_{\alpha
  \beta}$. We have $d_\alpha \succ d_{\alpha\beta}$ and so $d_\alpha
  d_\beta \succ_\lex d_{\alpha\beta}$.

\item $w_1=d_{\alpha\beta}d_\gamma$ with $d_{\alpha\beta}\succ d_\gamma$.
  Then $\alpha >\gamma$ (and $\alpha >\beta$ by the definition of
  $D$). Now, consider two sub-cases:
\begin{enumerate}
\item $\beta \geq \gamma $. Then
\begin{align*}
\alpha \beta \gamma &=_{C_n} \alpha \gamma \beta &&\text{(by \eqref{eq:chineserela})} \\
&=_{C_n} \beta \alpha\gamma && \text{(by \eqref{eq:chineserelb})}
\end{align*}
Therefore $\csw_D(d_{\alpha\beta} d_\gamma) = d_\beta d_{\alpha
  \gamma}$. Since $\alpha >\beta$, we have $d_{\alpha\beta}\succ
d_\beta$ and so $d_{\alpha\beta} d_\gamma \succ_\lex d_\beta d_{\alpha
  \gamma}$.

\item $\gamma >\beta $. Then $\alpha \beta \gamma =_{C_n} \gamma
  \alpha \beta$ by \eqref{eq:chineserelb}. Therefore
  $\csw_D(d_{\alpha\beta}d_\gamma) = d_\gamma d_{\alpha\beta}$. Since
  $\alpha >\gamma$, we have $d_{\alpha\beta}\succ d_\gamma$ and so
  $d_{\alpha\beta}d_\gamma \succ_\lex d_\gamma d_{\alpha\beta}$.
\end{enumerate}

\item If $w_1=d_\alpha d_{\beta\gamma}$ with $d_\alpha
  \succ d_{\beta\gamma}$.  Then $\alpha \geq \beta$ (and $\beta >\gamma$ by
  the definition of $D$). We have
\begin{align*}
\alpha \beta \gamma &=_{C_n} \alpha \gamma \beta && \text{(by \eqref{eq:chineserela})} \\
&=_{C_n} \beta \alpha \gamma. && \text{(by \eqref{eq:chineserelb})} 
\end{align*}
Now consider two sub-cases:
\begin{enumerate}
\item $\alpha =\beta$. Then $\csw_D(d_\alpha d_{\beta \gamma}) =
  d_{\alpha\gamma} d_{\beta}$. Since $\alpha > \gamma$, we have $d_{\alpha}
  \succ d_{\alpha\gamma}$ and so $d_\alpha d_{\beta \gamma} \succ_\lex
  d_{\alpha \gamma} d_\beta$.
\item $\alpha >\beta$. Then $\csw_D(d_\alpha d_{\beta \gamma}) =
  d_\beta d_{\alpha \gamma}$. Since $\alpha > \beta$, we have
  $d_\alpha \succ d_\beta$ and so $d_\alpha d_{\beta \gamma} \succ_\lex
  d_\beta d_{\alpha \gamma}$.
\end{enumerate}

\item If $w_1=d_{\alpha\beta}d_{\gamma\delta}$, with $d_{\alpha\beta}\succ
  d_{\gamma\delta}$. Then either $\alpha > \gamma $, or
  $\alpha=\gamma$ and $\beta > \delta$. (In both sub-cases, $\alpha
  >\beta$ and $\gamma >\delta$ by the definition of $D$.) Consider
  these sub-cases separately:
\begin{enumerate}
\item $\alpha =\gamma$ and $\beta > \delta$. Then $\alpha=\gamma >
  \beta >\delta$, so
\begin{align*}
\alpha {\beta \gamma \delta} &= \alpha\beta\alpha\delta \\
&=_{C_n} \alpha\alpha\beta\delta && \text{(by \eqref{eq:chineserela} applied to $\alpha\beta\alpha$)} \\
&=_{C_n} \alpha\alpha\delta \beta && \text{(by \eqref{eq:chineserela} applied to $\alpha\beta\delta$)} \\
&=_{C_n} \alpha\delta \alpha \beta && \text{(by \eqref{eq:chineserela} applied to $\alpha\alpha\delta$)} \\
&= \gamma \delta \alpha \beta.
\end{align*}
  Therefore $\csw_D(d_{\alpha\beta}d_{\gamma\delta}) = d_{\gamma
    \delta}d_{\alpha \beta}$. Since $\alpha =\gamma$ and $\beta >
  \delta$, we have $d_{\alpha\beta}\succ d_{\gamma \delta}$ and so
  $d_{\alpha\beta}d_{\gamma\delta}\succ_\lex d_{\gamma
    \delta}d_{\alpha \beta}$.

\item $\alpha >\gamma$. Now consider two sub-sub-cases separately:
\begin{enumerate}
\item $\beta \geq \gamma$. Then $\alpha >\beta \geq \gamma >\delta$
  and so
\begin{align*}
\alpha\beta\gamma\delta &=_{C_n} \alpha\gamma\beta\delta && \text{(by \eqref{eq:chineserela} applied to $\alpha\beta\gamma$)} \\
&=_{C_n} \beta\alpha\gamma\delta && \text{(by \eqref{eq:chineserelb} applied to $\alpha\gamma\beta$)} \\
&=_{C_n} \beta\alpha\delta\gamma && \text{(by \eqref{eq:chineserela} applied to $\alpha\gamma\delta$)} \\
&=_{C_n} \beta\gamma\alpha\delta. && \text{(by \eqref{eq:chineserelb} applied to $\alpha\delta\gamma$)}
\end{align*}
Consider two sub-sub-sub-cases separately:
\begin{enumerate}
\item $\beta=\gamma$. Then $\csw_D(d_{\alpha\beta}d_{\gamma\delta}) =
  d_{\beta}^2d_{\alpha\delta}$. Since $\alpha >\beta$, we have
  $d_{\alpha\beta} \succ d_\beta$ and so
  $d_{\alpha\beta}d_{\gamma\delta}\succ_\lex d_{\beta}^2d_{\alpha\delta}$.
\item $\beta >\gamma$. Then $\csw_D(d_{\alpha\beta}d_{\gamma\delta}) =
  d_{\beta \gamma}d_{\alpha\delta}$. Since $\alpha >\beta$, we have
  $d_{\alpha\beta} \succ d_{\beta\gamma}$ and so
  $d_{\alpha\beta}d_{\gamma\delta}\succ_\lex d_{\beta
    \gamma}d_{\alpha\delta}$.
\end{enumerate}

\item $\gamma >\beta$. Then $\alpha >\gamma >\beta,\delta$ and so
  $\alpha\beta\gamma\delta =_{C_n} \gamma \alpha \beta \delta$ by
  \eqref{eq:chineserelb} applied to $\alpha\beta\gamma$.  Depending on
  $\beta$ and $\delta$ we get two sub-sub-sub-cases:
\begin{enumerate}
\item $\beta \geq \delta$. Then
\begin{align*}
\gamma \alpha\beta\delta &=_{C_n}
   \gamma \alpha\delta\beta && \text{(by \eqref{eq:chineserela} applied to $\alpha\beta\delta$)} \\
&=_{C_n}   \gamma \beta \alpha \delta. && \text{(by \eqref{eq:chineserelb} applied to $\alpha\delta\beta$)}
\end{align*}
Therefore
  $\csw_D(d_{\alpha\beta}d_{\gamma\delta}) = d_{\gamma\beta}d_{\alpha
    \delta}$. Since $\alpha >\gamma$, we have $d_{\alpha\beta} \succ_\lex
  d_{\gamma\beta}$ and so $d_{\alpha\beta}d_{\gamma\delta} \succ_\lex
  d_{\gamma\beta}d_{\alpha \delta}$.
\item $\delta >\beta$. Then $\gamma\alpha\beta\delta =_{C_n} \gamma
  \delta \alpha \beta$ by \eqref{eq:chineserelb} applied to
  $\alpha\beta\delta$. Therefore
  $\csw(d_{\alpha\beta}d_{\gamma\delta}) = d_{\gamma \delta}
  d_{\alpha\beta}$. Since $\alpha >\gamma$, we have $d_{\alpha\beta}
  \succ_\lex d_{\gamma \delta}$ and so
  $d_{\alpha\beta}d_{\gamma\delta} \succ_\lex d_{\gamma \delta}
  d_{\alpha\beta}$.
\end{enumerate}
\end{enumerate}
\end{enumerate}
\end{enumerate}

For any $w\in D^*$, the word in $A^*$ that can be read from the
subscripts represents the same element of $C_n$ as $w$. Since the
original presentation \eqref{eq:chineserela}--\eqref{eq:chineserelb}
is homogeneous, if $w,w'\in D^*$ are such that $w =_{C_n} w'$, the
words in $A^*$ read from the subscripts have the same length. So there
are only finitely many possible words in $D^*$ representing each
element of $C_n$. In particular, the $(A,\rel{T})$ is
globally finite.

Notice that there is no word $w$ such that $w \succ_\lex w$. Hence
$(A,\rel{T})$ is acyclic. Since $(A,\rel{T})$ is globally finite and
acyclic, it is noetherian.
\end{proof}

Combining \fullref{Lemmata}{lem:irredchinese} and
\ref{lem:chinesenoetherian} and the fact that $D$-Chinese staircase
words form a language of unique normal forms for $C_n$, we obtain the
desired result:

\begin{theorem}
$(A,\rel{T})$ is a finite complete rewriting system presenting $C_n$.
\end{theorem}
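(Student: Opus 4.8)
The plan is to combine the work already done: $(D,\rel{T})$ is finite, it is noetherian by \fullref{Lemma}{lem:chinesenoetherian}, its irreducible words are exactly the $D$-Chinese staircase words by \fullref{Lemma}{lem:irredchinese}, and those staircase words are unique representatives of the elements of $C_n$. Finiteness needs no further argument, since $D$ is finite and $\rel{T}$ consists of finitely many rules whose left-hand sides are pairs of letters of $D$. So the two remaining tasks are to prove confluence (which, with noetherianity, gives completeness) and to confirm that $\pres{D}{\rel{T}}$ presents $C_n$.

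For confluence the idea is to avoid any inspection of overlaps or critical pairs and instead exploit uniqueness of the staircase normal forms. First I would observe that every rule $dd'\imreduces\csw_D(dd')$ holds in $C_n$, so $w\reduces_{\rel{T}}w'$ implies $w=_{C_n}w'$; hence any irreducible word reachable from $w$ represents the same element of $C_n$ as $w$. By noetherianity every word reduces to at least one irreducible word, and by \fullref{Lemma}{lem:irredchinese} such words are $D$-Chinese staircase words, which are unique representatives; therefore the irreducible word reachable from $w$ is unique. A noetherian system in which every word has a unique irreducible descendant is confluent: if $u\reduces_{\rel{T}}u'$ and $u\reduces_{\rel{T}}u''$, reduce each of $u',u''$ to an irreducible word; both are irreducible descendants of $u$ and so coincide, supplying the common reduct. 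Thus $(D,\rel{T})$ is complete.

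To see that $\pres{D}{\rel{T}}$ presents $C_n$, I would note that $D$ generates $C_n$ and every rule of $\rel{T}$ holds in $C_n$, so there is a surjective homomorphism from the presented monoid $M$ onto $C_n$. Completeness makes the irreducible words a cross-section of $M$, so the elements of $M$ correspond bijectively to the $D$-Chinese staircase words, which in turn correspond bijectively to the elements of $C_n$; under these identifications the homomorphism is the identity on staircase words, hence a bijection, so $M\cong C_n$.

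I do not anticipate a genuine obstacle here: the only delicate analytic step, termination, is already packaged in \fullref{Lemma}{lem:chinesenoetherian}. The one point deserving emphasis is the logical shortcut --- noetherianity together with uniqueness of normal forms yields confluence with no critical-pair analysis at all --- which is exactly the simplification the change of generating set was designed to provide over the treatments of Chen \& Qiu and G\"{u}zel Karpuz.
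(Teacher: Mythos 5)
Your proposal is correct and follows essentially the same route as the paper, which likewise combines \fullref{Lemma}{lem:irredchinese}, \fullref{Lemma}{lem:chinesenoetherian}, and the uniqueness of $D$-Chinese staircase normal forms to get confluence without any critical-pair analysis; you merely spell out the details (uniqueness of irreducible descendants, and the isomorphism argument for the presentation) that the paper leaves implicit. Incidentally, the system is over the alphabet $D$, so the theorem's ``$(A,\rel{T})$'' is a slip in the paper that your writing $(D,\rel{T})$ silently corrects.
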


\subsection{Algorithm for right-multiplication}

Cassaigne et al.\ \cite[\S~2.2]{cassaigne_chinese} give the following
algorithm that takes a symbol $\gamma \in A$ and the staircase
$\sigma$ corresponding to a Chinese staircase word $w$ and computes
the staircase corresponding to the unique staircase word equal to
$w\gamma$. By starting with the empty staircase diagram (where all the
entries are $0$, corresponding to the empty staircase word
$\emptyword$) and iteratively applying this algorithm, one can recover
the staircase word equal in $C_n$ to an arbitrary word. For the
purposes of describing this algorithm, the top $k$ rows of a staircase
diagram form a \defterm{$k$-staircase diagram}. A $k$-staircase
diagram corresponds to the \defterm{$k$-staircase word}
$\ell^{(1)}\dots \ell^{(k)}$, which is a prefix (not necessarily proper) of
a staircase word \eqref{eq:chinesenf}. The algorithm will recursively
work with a $k$-staircase diagram and a symbol $\gamma \leq k$; to
start the computation, we simply begin with $k = n$.

\begin{algorithm}
\label{alg:chineseright}
~\par\nobreak
\textit{Input:} A $k$-staircase diagram $\sigma$ corresponding to a
$k$-staircase word $\ell^{(1)}\cdots \ell^{(k)}$ and a symbol $\gamma \leq k$.

\textit{Output:} A $k$-staircase diagram $\sigma \cdot \gamma$
corresponding to the $k$-staircase word equal to $\ell^{(1)}\cdots
\ell^{(k)}\gamma$ in $C_n$.

\textit{Method:} Write $\sigma = (\sigma',R_1)$, where $R_1$ is the
bottom ($k$-th) row of the diagram and $\sigma'$ the remaining
$(k-1)$-staircase diagram.

\begin{enumerate}

\item If $\gamma = k$, then $\sigma \cdot \gamma = (\sigma',R_1')$, where
  $R_1'$ is obtained from $R_1$ adding $1$ to $\sigma_{k}$.

\item If $\gamma < k$, let $\beta$ be maximal such that the entry in
  column $\beta$ of $R_1$ is non-zero. If no such $\beta$ exists, then
  set $\beta = \gamma$.
\begin{enumerate}

\item If $\gamma \geq \beta$, then $\sigma \cdot \gamma = (\sigma' \cdot \gamma,R_1)$.

\item If $\gamma < \beta$ and $\beta < k$, then $\sigma \cdot \gamma =
  (\sigma'\cdot \beta,R_1')$, where $R_1'$ is obtained from $R_1$ by
  subtracting $1$ from $\sigma_{k\beta}$ and adding $1$ to
  $\sigma_{k\gamma}$.

\item If $\gamma < \beta$ and $\beta = k$, then $\sigma \cdot \gamma =
  (\sigma',R_1')$ where $R_1'$ is obtained from $R_1$ by subtracting
  $1$ from $\sigma_{k}$ and adding $1$ to $\sigma_{k\gamma}$.

\end{enumerate}
\end{enumerate}
\end{algorithm}

\subsection{Algorithm for left-multiplication}
\label{subsec:chineseleftmult}

When we construct the biautomatic structure for the Chinese monoid and
prove that the left-multiplication relation is recognized by a
finite transducer, we will use the following left-handed analogue of
\fullref{Algorithm}{alg:chineseright}. We believe this algorithm is
new and potentially of independent interest.

\begin{algorithm}
\label{alg:chineseleft}
~\par\nobreak \textit{Input:} A Chinese staircase $\sigma$
corresponding to a Chinese staircase word $w$, and a symbol $\gamma
\in A$.

\textit{Output:} A Chinese staircase $\gamma\cdot\sigma$ corresponding
to the unique Chinese staircase equal in $C_n$ to $\gamma w$.

\textit{Method:} Store a symbol $\beta$ from $\{1,\ldots,n\} \cup
\{\bot\}$, initially set to $\bot$. There are two stages. In the first
stage, iterate the following for $\rho = 1,\ldots,\gamma-1$:
\begin{enumerate}

\item If every entry in row $\rho$ is empty, do nothing for this row.

\item Otherwise, let $\eta$ be the index of the rightmost non-zero
  entry (that is, $\eta$ is minimal with $\sigma_{\rho\eta} > 0$). Then:
\begin{enumerate}

\item If $\beta = \bot$:

\begin{enumerate}

\item If $\eta < \rho$, decrement $\sigma_{\rho\eta}$ by $1$,
  increment $\sigma_{\rho}$ by $1$, and set $\beta = \eta$.

\item If $\eta = \rho$, decrement $\sigma_\rho$ by $1$ and set
  $\beta = \eta$.

\end{enumerate}

\item Otherwise, when $\beta \neq \bot$:

\begin{enumerate}

\item If $\eta < \beta$, decrement $\sigma_{\rho\eta}$ by $1$,
  increment $\sigma_{\rho\beta}$ by $1$, and set $\beta = \eta$.

\item Otherwise, when $\eta \geq \beta$, do nothing.

\end{enumerate}
\end{enumerate}
\end{enumerate}
In the second stage, for $\rho = \gamma$,
\begin{enumerate}

\item If $\beta = \bot$, increment $\sigma_\rho$ by $1$.

\item Otherwise, when $\beta \neq \bot$, increment $\sigma_{\rho\beta}$ by $1$.

\end{enumerate}
Finally, output the current staircase.
\end{algorithm}

\begin{proposition}
\fullref{Algorithm}{alg:chineseleft} always halts with the correct output.
\end{proposition}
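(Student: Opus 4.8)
The plan is first to dispose of termination, which is immediate: the first stage performs exactly $\gamma-1$ iterations and the second stage a single update, so the method always halts. It remains to check that the output is a genuine Chinese staircase and that it represents $\gamma w$. Well-formedness is easy: every decrement is applied to an entry $\sigma_{\rho\eta}$ that is chosen to be strictly positive (by the definition of $\eta$), and every cell $\sigma_{\rho\beta}$ or $\sigma_{\gamma\beta}$ that is incremented is legitimate, because whenever $\beta$ is used as a column index it was set to a value $\leq\rho'$ at a strictly earlier row $\rho'$, so $\beta<\rho$ (respectively $\beta<\gamma$). Thus the output is a valid staircase word, and since Chinese staircase words are unique normal forms, it will automatically be the normal form of $\gamma w$ as soon as we show it equals $\gamma w$ in $C_n$.

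For correctness I would set up a loop invariant tracking the carried letter. Writing $w=\ell^{(1)}\cdots\ell^{(n)}$, let $P_\rho$ be the product of the already-modified rows $1,\dots,\rho-1$ of the current diagram and $S_\rho=\ell^{(\rho)}\cdots\ell^{(n)}$ the product of the untouched rows. The invariant I would maintain, just before processing row $\rho$, is
\[
\gamma w =_{C_n} P_\rho\, c_\rho\, S_\rho,\qquad\text{where } c_\rho=\begin{cases}\gamma & \text{if } \beta=\bot,\\ \gamma\beta & \text{if } \beta\neq\bot.\end{cases}
\]
The subtle point is that although the algorithm stores only the single symbol $\beta$, the carried word is really the two-letter word $\gamma\beta$ once the first non-empty row has been passed; this is what keeps the length count correct, since the pair $(\gamma\beta)$ inserted in the final stage accounts for the letter deleted at the transition row. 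The base case $\rho=1$ is trivial, and the invariant gives the result at once: at $\rho=\gamma$ the second stage replaces $c_\gamma$ by a pair $(\gamma\beta)$ or a single $\gamma$ absorbed into row $\gamma$, yielding precisely the output diagram.

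The substance of the argument is that each iteration preserves the invariant, i.e.\ that $c_\rho\,\ell^{(\rho)}=_{C_n}\tilde\ell^{(\rho)}\,c_{\rho+1}$, where $\tilde\ell^{(\rho)}$ is the modified row and $c_{\rho+1}$ the updated carried word. I would treat the cases of the algorithm in turn, each reducing to a short list of elementary identities in $C_n$: the \emph{conversion} identities $\gamma\,\rho\eta=_{C_n}\rho\,\gamma\eta$ and $\gamma\beta\,\rho\eta=_{C_n}\rho\beta\,\gamma\eta$ (for $\eta<\beta<\rho<\gamma$), together with the diagonal variant $\gamma\,\rho\rho=_{C_n}\rho\,\gamma\rho$, which peel the smallest letter $\eta$ (or $\rho$) out of row $\rho$ and pass it to the carried word; the \emph{commutation} identities $\gamma\eta\,\rho j=_{C_n}\rho j\,\gamma\eta$ (for $j\geq\eta$) and $\gamma\eta\,\rho=_{C_n}\rho\,\gamma\eta$, which let the carried word slide right past a pair or single letter whose small index is at least the carried small index; and the \emph{sorting} identities $\rho\beta\,\rho\eta=_{C_n}\rho\eta\,\rho\beta$ (for $\eta<\beta$) and $\rho\,\rho j=_{C_n}\rho j\,\rho$, which move a newly produced pair or letter into its correct column. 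Each of these is either one application of \eqref{eq:chineserela} or \eqref{eq:chineserelb}, or is exactly one of the two-row products already computed in the proof of \fullref{Lemma}{lem:chinesenoetherian}; chaining them across row $\rho$ gives the required local equality, and the final stage is handled by the same identities with $\rho=\gamma$.

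The main obstacle, and the part demanding genuine care, is the bookkeeping in case (b)(i) and its diagonal analogue (a)(i). There the conversion identity first introduces a new pair $(\rho\beta)$ (or a single $\rho$) at the \emph{front} of the row, and one must then show both that this symbol commutes rightward into its correct sorted position inside $\tilde\ell^{(\rho)}$ and that the carried word $\gamma\eta$ commutes all the way past the remainder of the row. Both steps rely on the observation that, after the conversion, every surviving pair and the diagonal of row $\rho$ have small index $\geq\eta$, so exactly the commutation and sorting identities above apply; checking that the result is the \emph{sorted} normal-form row $\tilde\ell^{(\rho)}$, rather than merely a word $C_n$-equal to it, is what makes this case delicate. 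Once the invariant is established, the uniqueness of staircase words closes the argument.
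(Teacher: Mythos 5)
Your proposal is correct and follows essentially the same route as the paper: the paper likewise drags the carried word $\gamma\beta$ rightward row by row (describing $\beta$ as a letter ``dragged'' by $\gamma$), establishing exactly your local invariant $\gamma\beta\,\ell^{(\rho)} =_{C_n} m^{(\rho)}\gamma\hat\beta$ in each case of the algorithm via \eqref{eq:chineserela}--\eqref{eq:chineserelb} and the derived commutations \eqref{eq:chinesezxxcomm}--\eqref{eq:chinesezwyxcomm}, which are precisely your conversion, commutation, and sorting identities. Your explicit treatment of termination, well-formedness ($\beta<\rho$ at the start of each iteration, which the paper also notes by induction), and the delicacy of resorting the new pair in case 2(b)i all match the paper's argument, merely packaged more systematically.
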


\begin{proof}
First of all, notice that by setting $x = y$ in
\eqref{eq:chineserelb}, we obtain
\begin{equation}
x(zx) =_{C_n} (zx)x; \label{eq:chinesezxxcomm}
\end{equation}
and by setting $y=z$ in \eqref{eq:chineserela}, we obtain
\begin{equation}
z(zx) =_{C_n} (zx)z, \label{eq:chinesezzxcomm}
\end{equation}
that is, $zx$ commutes with $x$ and with $z$ for $x \leq
z$. Furthermore, for $w \leq x \leq y \leq z$, by applying
\eqref{eq:chineserelb} twice we obtain
\begin{equation}
\label{eq:chinesezwyxcomm}
(zw)(yx) = zwyx =_{C_n} yzwx =_{C_n} yxzw = (yx)(zw)
\end{equation}
that is, $zw$ and $yx$ commute for $w \leq x \leq y \leq z$.

We consider a staircase word \eqref{eq:chinesenf} and a symbol
$\gamma$ and the process of turning $\gamma \ell^{(1)}\cdots \ell^{(n)}$
into a staircase word $m^{(1)}\cdots m^{(n)}$ (where the $m^{(i)}$ are
the row words) using the relations
(\ref{eq:chineserela}--\ref{eq:chineserelb}). Let $i$ be minimal such
that $\ell^{(i)}$ is non-empty.

Depending on whether $i \geq \gamma$ or $i < \gamma$, we distinguish
two cases. Suppose first that $i \geq \gamma$. Then for $\rho =
1,\ldots,\gamma-1$, the iterative step in the algorithm always
proceeds via case~1, and these rows of the output staircase are the
same as those of the input staircase.  This is the correct output
since
\[
\gamma \ell^{(1)}\cdots \ell^{(n)} =_{C_n} \ell^{(1)}\cdots \ell^{(\gamma-1)}\gamma
\ell^{(\gamma)} \cdots \ell^{(n)}
\]
When the algorithm reaches the step for $\rho = \gamma$, it still has
$\beta = \bot$, and so it increments $\sigma_\gamma$ by $1$ and
halts. This produces a new word
\[
m^{(\gamma)} = (\gamma 1)^{\sigma_{\gamma 1}}\cdots(\gamma(\gamma-1))^{\sigma_{\gamma(\gamma-1)}}\gamma^{\sigma_\gamma+1}
\]
that is equal in $C_n$ to $\gamma \ell^{(\gamma)}$, by repeated
application of \eqref{eq:chinesezzxcomm}. Now, the resulting staircase
word $m^{(1)}\cdots m^{(n)}$ is equal in $C_n$ to $\gamma
\ell^{(1)}\cdots \ell^{(n)}$, since $m^{(\gamma)} =_{C_n} \gamma
\ell^{(\gamma)}$ and $m^{(j)} = \ell^{(j)}$ for $j \neq \gamma$. Hence in
this case the algorithm produces the correct output.

Suppose now that $i < \gamma$. Then the iterative step in the algorithm
proceeds via case~1 for $\rho = 1,\ldots,i-1$, and these rows of the
output staircase are the same as those of the input staircase. That
is, $m^{(j)} = \ell^{(j)}$ for $j = 1,\ldots,i-1$. This is the correct
output since
\[
\gamma \ell^{(1)}\cdots \ell^{(n)} =_{C_n} \ell^{(1)}\cdots \ell^{(i-1)}\gamma \ell^{(i)} \cdots \ell^{(n)}.
\]
For $\rho = i$ the algorithm proceeds via case~2(a). If $\eta < \rho$, then
\begin{align*}
&\gamma \ell^{(i)} \\
={}& \gamma (i \eta)^{\sigma_{i \eta}}\cdots(i(i-1))^{\sigma_{i(i-1)}}i^{\sigma_i} \\
={}& \gamma i\eta(i \eta)^{\sigma_{i \eta}-1}\cdots(i(i-1))^{\sigma_{i(i-1)}}i^{\sigma_i} && \text{(since $\sigma_{i\eta} \neq 0$)} \\
=_{C_n}{}& \gamma \eta i(i \eta)^{\sigma_{i \eta}-1}\cdots(i(i-1))^{\sigma_{i(i-1)}}i^{\sigma_i} &&\text{(by \eqref{eq:chineserela})} \\
=_{C_n}{}& \gamma \eta (i \eta)^{\sigma_{i \eta}-1}\cdots(i(i-1))^{\sigma_{i(i-1)}}i^{\sigma_i+1} &&\text{(by \eqref{eq:chinesezzxcomm})} \\
=_{C_n}{}& (i \eta)^{\sigma_{i \eta}-1}\cdots(i(i-1))^{\sigma_{i(i-1)}}i^{\sigma_i+1}\gamma\eta &&\text{(by \eqref{eq:chinesezwyxcomm}, and \eqref{eq:chineserelb} when $\sigma_i = 0$)} \\
={}& m^{(i)}\gamma\eta
\end{align*}
and the algorithm, proceeding via case~2(a)i, outputs the $i$-th row
corresponding to $m^{(i)}$ and now has $\beta = \eta$. (This
parameter $\beta$ works as a letter that is `dragged' by $\gamma$ as
it moves rightward through the word to its proper place.) If, on the
other hand, $\eta = i$, then
\begin{align*}
&\gamma \ell^{(i)} \\
={}& \gamma i^{\sigma_i} \\
={}& \gamma ii^{\sigma_i-1} && \text{(since $\sigma_i \neq 0$)} \\
=_{C_n}{}& i^{\sigma_i-1}\gamma i, &&\text{(by \eqref{eq:chinesezwyxcomm}, and \eqref{eq:chineserelb} when $\sigma_i = 2$)}
\end{align*}
and the algorithm, proceeding via case~2(a)ii, outputs the $i$-th row
corresponding to $m^{(i)} = i^{\sigma_i-1}$ and now has $\beta =
i$. Notice that in either case we have $\gamma \ell^{(i)} =_{C_n}
m^{(i)}\gamma\beta$ and $\beta \leq \rho$, and so in the next
iteration $\beta$ will be strictly less than the new value of
$\rho$. We shall see by induction that $\beta$ is always less than
$\rho$ when a new iteration begins.

For $\rho = i+1,\ldots,\gamma - 1$, the algorithm produces empty rows
whenever $\ell^{(\rho)}$ is empty and does not change $\beta$. So suppose
$\ell^{(\rho)}$ is not empty. Then since $\beta \neq \bot$, the algorithm
proceeds via case~2(b). We know that $\beta < \rho$. If $\eta <
\beta$, then
\begin{align*}
&\gamma\beta \ell^{(\rho)} \\
={}& \gamma\beta (\rho \eta)^{\sigma_{\rho \eta}}\cdots(\rho(\rho-1))^{\sigma_{\rho(\rho-1)}}\rho^{\sigma_\rho} \\
={}& \gamma\beta \rho\eta(\rho \eta)^{\sigma_{\rho \eta}-1}\cdots(\rho(\rho-1))^{\sigma_{\rho(\rho-1)}}\rho^{\sigma_\rho} && \text{(since $\sigma_{\rho\eta-1} \neq 0$)} \\
=_{C_n}{}& \rho\gamma\beta \eta(\rho \eta)^{\sigma_{\rho \eta}-1}\cdots(\rho(\rho-1))^{\sigma_{\rho(\rho-1)}}\rho^{\sigma_\rho}  &&\text{(by \eqref{eq:chineserelb})} \\
=_{C_n}{}& \rho\gamma\eta\beta(\rho \eta)^{\sigma_{\rho \eta}-1}\cdots(\rho(\rho-1))^{\sigma_{\rho(\rho-1)}}\rho^{\sigma_\rho}  &&\text{(by \eqref{eq:chineserela})} \\
=_{C_n}{}& \gamma\eta\rho\beta(\rho \eta)^{\sigma_{\rho \eta}-1}\cdots(\rho(\rho-1))^{\sigma_{\rho(\rho-1)}}\rho^{\sigma_\rho}  &&\text{(by \eqref{eq:chineserelb})} \\
=_{C_n}{}& \rho\beta(\rho \eta)^{\sigma_{\rho \eta}-1}\cdots(\rho(\rho-1))^{\sigma_{\rho(\rho-1)}}\rho^{\sigma_\rho}\gamma\eta  &&\text{(by \eqref{eq:chinesezwyxcomm}, and \eqref{eq:chineserelb} for $\sigma_\rho=1$)} \\
=_{C_n}{}& (\rho \eta)^{\sigma_{\rho \eta}-1}\cdots(\rho \beta)^{\sigma_{\rho \beta}+1}\cdots(\rho(\rho-1))^{\sigma_{\rho(\rho-1)}}\rho^{\sigma_\rho}\gamma\eta  &&\text{(by \eqref{eq:chinesezwyxcomm}),} \\
={}& m^{(\rho)}\gamma\eta,
\end{align*}
and the algorithm, proceeding via case~2(b)i, outputs the $\rho$-th row corresponding to $m^{(\rho)}$ and now has $\beta
= \eta$. If, on the other hand, $\eta \geq \beta$, then
\begin{align*}
&\gamma\beta \ell^{(\rho)} \\
={}& \gamma\beta (\rho\eta)^{\sigma_{\rho\eta}}\cdots(\rho(\rho-1))^{\sigma_{\rho(\rho-1)}}\rho^{\sigma_\rho} \\
=_{C_n}{}& (\rho\eta)^{\sigma_{\rho\eta}}\cdots(\rho(\rho-1))^{\sigma_{\rho(\rho-1)}}\rho^{\sigma_\rho}\gamma\beta  &&\text{(by \eqref{eq:chinesezwyxcomm}, and \eqref{eq:chineserelb} when $\sigma_\rho=1$)}\\
={}& m^{(\rho)}\gamma\beta.
\end{align*}
and the algorithm, proceeding via case~2(b)ii, outputs the same $\rho$-th
row (corresponding to $m^{(\rho)} = \ell^{(\rho)}$) and has $\beta$
unchanged. Notice that in either case $\gamma\beta \ell^{(\rho)} =_{C_n}
m^{(\rho)}\gamma\hat\beta$, where $\hat\beta$ is the new value of
$\beta$ obtained by executing the algorithm. Also, $\beta \leq \rho$,
and so in the next iteration $\beta$ will be strictly less than the
new value of $\rho$.

Finally, for $\rho = \gamma$,
\begin{align*}
&\gamma\beta \ell^{(\rho)} \\
={}& \rho\beta (\rho \eta)^{\sigma_{\rho \eta}}\cdots(\rho(\rho-1))^{\sigma_{\rho(\rho-1)}}\rho^{\sigma_\rho} &&\text{(since $\gamma=\rho$)}\\
=_{C_n}{}& (\rho \eta)^{\sigma_{\rho \eta}}\cdots(\rho \beta)^{\sigma_{\rho \beta}+1}\cdots(\rho(\rho-1))^{\sigma_{\rho(\rho-1)}}\rho^{\sigma_\rho}  &&\text{(by \eqref{eq:chinesezwyxcomm}),} \\
={}& m^{(\rho)}
\end{align*}
and so the algorithm, proceeding via case~2, outputs the
$\rho$-th row corresponding to $m^{(\rho)}$ and halts.

For $j > \rho$, we have $\ell^{(j)} = m^{(j)}$ and so, at the end of
the algorithm, we have the staircase diagram corresponding to the
Chinese staircase word $m^{(1)}\cdots m^{(n)}$.
\end{proof}

\subsection{Biautomaticity}
\label{subsec:chinesebiauto}

The essential idea in constructing our biautomatic structure is to
have a language of normal forms related to \eqref{eq:chinesenf} and to
show that \fullref{Algorithms}{alg:chineseright} and
\ref{alg:chineseleft} can be performed by a transducer. We will
initially use the generating set $D$ introduced in
\fullref{\S}{subsec:chinesefcrs}, but we will later switch to the
standard generating set $A$.

Let
\[
K^{(k)} = d_{k 1}^*d_{k 2}^*\cdots d_{k(k-1)}^*d_k^*
\]
and
\[
K = K^{(1)}K^{(2)}\cdots K^{(n-1)}K^{(n)};
\]
notice that $K$ is regular.

\subsubsection{Right-multiplication by transducer}

Let us show that the relation $K_{d_{\gamma}} = \{(u,v) : u,v \in K,
ud_\gamma =_{C_n} v\}$ is recognized by a finite transducer for any $\gamma
\in \{1,\ldots,n\}$. We only need to consider right multiplication by
$d_\gamma$, where $\gamma \in A$, because we will later switch back to
the generating set $A$.

We imagine a transducer reading a pair of words in $K$ \emph{from
  right to left} with the aim of checking whether this pair is in
$K_{d_{\gamma}}$. It is easiest to describe the transducer as reading
symbols from the top tape and outputting symbols on the bottom
tape. As it reads a symbol from the top tape, the transducer knows
which language $K^{(k)}$ (corresponding to the $k$-th row) the current
symbol comes from, for symbols $d_{k\beta}$ and $d_k$ occur only in
subwords from $K^{(k)}$. The transducer stores a symbol $\alpha$ from
$\{1,\ldots,n\} \cup \{\infty\}$ that records the symbol to be
inserted into the $(k-1)$-staircase word from $K^{(1)}\cdots
K^{(k-1)}$. Initially $\alpha = \gamma$. The value $\alpha = \infty$
indicates that no further insertion is necessary. In this case the
transducer simply reads symbols from the top tape and outputs the
same symbols on the bottom tape until the end of input.

When $\alpha \neq \infty$, the transducer functions as follows. As
remarked above, the transducer knows when it starts reading a subword
from $K^{(k)}$. If $\alpha = k$, the transducer outputs an extra
symbol $d_k$ (corresponding to case~1 in
\fullref{Algorithm}{alg:chineseright}) and sets $\alpha = \infty$. If
$\alpha < k$, the transducer needs to calculate $\beta$ in accordance
with case~2 of \fullref{Algorithm}{alg:chineseright}. But $\beta$ can
be determined from the rightmost symbol of the subword from $K^{(k)}$,
which is the first symbol of this subword it encounters, since it is
reading right-to-left: this symbol is $d_{k\beta}$ or $d_k$, in which
case $\beta = k$. [If there is no symbol from $K^{(k)}$ on the top
  tape, then the next symbol it reads is from $K^{(j)}$ for some $j <
  k$. No special action is required here, for the automaton now
  proceeds to insert $\alpha$ into the subword from $K^{(1)}\cdots
  K^{(j)}$, which is in accordance with the recursion in case 2(a) of
  \fullref{Algorithm}{alg:chineseright}.]

If $\alpha \geq \beta$, the transducer reads each symbol of the
subword from $K^{(k)}$ and outputs it on the bottom tape, all the while
keeping the same value of $\alpha$. It then arrives at the rightmost
end of the next subword with the correct value of $\alpha$ (this
corresponds to the recursion in case 2(a) of
\fullref{Algorithm}{alg:chineseright}).

If $\alpha < \beta$ and $\beta < k$, then the transducer reads each
symbol of the subword from $K^{(k)}$ and outputs it on the bottom
tape, \emph{with the following exceptions}:
\begin{itemize}

\item It does not output anything on reading the first symbol
  $d_{k\beta}$ (corresponding to decrementing $\sigma_{k\beta}$ by
  $1$).

\item It outputs an extra symbol $d_{k\alpha}$ immediately after
  reaching the leftmost end of the (possibly empty) subword of symbols
  $d_{k\alpha}$ (it can non-deterministically predict when it has
  reached the end of this subword; this corresponds to incrementing
  $\sigma_{k\alpha}$ by $1$).

\end{itemize}
On reaching the end of the subword from $K^{(k)}$, it sets $\alpha =
\beta$, corresponding to the recursion in case~2(b) of
\fullref{Algorithm}{alg:chineseright}).

If $\alpha < \beta$ and $\beta = k$, then the transducer reads each
symbol of the subword from $K^{(k)}$ and outputs it on the bottom
tape, \emph{with the following exceptions}:
\begin{itemize}

\item It does not output anything on reading the first symbol
  $d_{k}$ (corresponding to decrementing $\sigma_{k}$ by
  $1$).

\item It outputs an extra symbol $d_{k\alpha}$ immediately after
  reaching the leftmost end of the (possibly empty) subword of symbols
  $d_{k\alpha}$ (it can non-deterministically predict when it has
  reached the end of this subword; this corresponds to incrementing
  $\sigma_{k\alpha}$ by $1$).

\end{itemize}
On reaching the end of the subword from $K^{(k)}$, it sets $\alpha =
\infty$, corresponding to case~2(c) of
\fullref{Algorithm}{alg:chineseright}).

This shows that $K_{d_\gamma}$ is recognized by a transducer;
$K_{d_\gamma}$ is therefore a rational relation.

\subsubsection{Left-multiplication by transducer}

Let us show that the relation ${}_{d_{\gamma}}K = \{(u,v) : u,v \in K,
d_\gamma u =_{C_n} v\}$ is recognized by a finite transducer for any
$\gamma \in \{1,\ldots,n\}$.

We imagine a transducer reading a pair of words in $K$ from \emph{left to
right} and checking whether this pair is in ${}_{d_{\gamma}}K$. As
before, we will describe the transducer as reading symbols from the
top tape and outputting symbols on the bottom tape. As it reads a
symbol from the top tape, the transducer knows which $K^{(k)}$ the
current symbol comes from, for symbols $d_{k\alpha}$ and $d_k$ occur
only in subwords from $K^{(k)}$. The transducer also
non-deterministically looks ahead one symbol on the top tape, so it
knows when it has reached the end of a substring (possibly empty) of
symbols $d_{k\alpha}$ or $d_k$.

In its state, the transducer stores the symbol $\beta \in
\{1,\ldots,n\} \cup \{\bot,\infty\}$. Initially $\beta = \bot$. The
value $\beta = \infty$ indicates that no further changes are necessary
and the transducer simply reads symbols from the top tape and outputs
the same symbols on the bottom tape. When $\beta \neq \infty$, the
transducer functions as follows.

Suppose it is reading some subword from $K^{(\rho)}$ from the top
tape, where $\rho < \gamma$. If this subword is empty, then the
corresponding output subword is empty, as per case~1 of the iterative
part of \fullref{Algorithm}{alg:chineseleft}. When this subword is
non-empty, and the algorithm proceeds via case~2(a) or 2(b), the
transducer can determine $\eta$ from the first symbol
$d_{\rho\eta}$ or $d_\rho$ of the subword. It reads each symbol of
this subword and outputs the same symbol \emph{with the following
  exceptions}:
\begin{itemize}

\item If $\beta = \bot$ and $\eta < \rho$ (case~2(a)i), it outputs
  $\emptyword$ on reading this symbol $d_{\rho\eta}$ (corresponding
  to decrementing $\sigma_{\rho\eta}$ by $1$), outputs an extra
  symbol $d_\rho$ immediately after reading the last symbol of this
  subword (corresponding to incrementing $\sigma_{\rho}$ by $1$), and
  sets $\beta = \eta$.

\item If $\beta = \bot$ and $\eta = \rho$ (case~2(a)ii), it outputs
  $\emptyword$ on reading the first symbol $d_\rho$
  (corresponding to decrementing $\sigma_{\rho}$ by $1$), and sets
  $\beta = \eta$.

\item If $\beta \neq \bot$ and $\eta < \beta$ (case~2(b)i), it
  outputs $\emptyword$ on reading this symbol $d_{\rho\eta}$
  (corresponding to decrementing $\sigma_{\rho\eta}$ by $1$),
  outputs an extra symbol $d_{\rho\beta}$ immediately after reading
  the last symbol $d_{\rho\beta}$ (corresponding to incrementing
  $\sigma_{\rho\beta}$ by $1$), and sets $\beta = \eta$.

\end{itemize}
[Notice that in case~2(b)ii, the transducer simply reads symbols and
  outputs them: there are no exceptions in this case.]

Finally, on reading the subword from $K^{(\gamma)}$, again it reads
symbols and outputs them, except that if $\beta = \bot$ it outputs an
extra symbol $d_\gamma$ at the end of the (possibly empty) string of
symbols $d_\gamma$, and if $\beta \neq \bot$, it outputs an extra
symbol $d_{\gamma\beta}$ at the end of the (possibly empty) string of
symbols $d_{\gamma\beta}$. On doing this, it sets $\beta = \infty$ and
simply reads to the end of the top tape and outputs the same symbols
on the bottom tape.

This shows that ${}_{d_\gamma}K$ is recognized by a transducer;
${}_{d_\gamma}K$ is therefore a rational relation.

\subsubsection{Deducing biautomaticity}

Let $\rel{Q} \subseteq D^* \times A^*$ be the rational relation
\[
\bigl\{(d_{\alpha\beta},\alpha\beta),(d_\alpha,\alpha) : \alpha,\beta \in A, \alpha > \beta\bigr\}^*.
\]
Let
\[
L = K \circ \rel{Q} = \bigl\{u \in A^* : (\exists u' \in K)((u',u) \in \rel{Q})\bigr\};
\]
then $L$ is a regular language over $A$ that maps onto $C_n$. [In
  fact, $L$ is the set of Chinese staircase words, but we do not need
  to know this.] Then for any $\gamma \in A$,
\begin{align*}
(u,v) \in L_\gamma &\iff u \in L \land v \in L \land u\gamma =_{C_n} v\\
&\iff (\exists u',v' \in K)((u',u) \in \rel{Q} \land (v',v) \in \rel{Q} \land u'd_\gamma =_{C_n} v')\\
&\iff (\exists u',v' \in K)((u',u) \in \rel{Q} \land (v',v) \in \rel{Q} \land (u',v') \in K_{d_\gamma})\\
&\iff (u,v) \in \rel{Q}^{-1} \circ K_{d_\gamma} \circ \rel{Q}.
\end{align*}
Therefore $L_\gamma$ is a rational relation. Now, if $(u,v) \in
L_\gamma$, then $|v| = |u|+1$ since the defining relations
(\ref{eq:chineserela}--\ref{eq:chineserelb}) preserve lengths of words
over $A$. By \fullref{Proposition}{prop:rationalbounded},
$L_\gamma\rpad$ and $L_\gamma\lpad$ are regular.

Similarly, from the fact that ${}_{c_\gamma}K$ is a rational relation,
we deduce that ${}_\gamma L = \rel{Q}^{-1} \circ {}_{d_\gamma}K \circ
\rel{Q}$ is a rational relation and thus, by
\fullref{Proposition}{prop:rationalbounded}, that ${}_\gamma L\rpad$
and ${}_\gamma L\lpad$ are regular.

This proves the result:

\begin{theorem}
\label{thm:chinesebiauto}
$(A,L)$ is a biautomatic structure for the Chinese monoid $C_n$.
\end{theorem}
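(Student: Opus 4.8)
The plan is to verify directly that the pair $(A,L)$ satisfies every requirement of \fullref{Definition}{def:autstruct}. Two of these are already in hand: $L$ is regular and $L$ maps onto $C_n$, both noted when $L$ was introduced. What remains is to show that, for each $a \in A \cup \{\emptyword\}$, the four relations $L_a\rpad$, ${}_aL\rpad$, $L_a\lpad$, and ${}_aL\lpad$ are regular. I would dispose of the generators $a = \gamma \in A$ first and treat $a = \emptyword$ separately at the end.

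For $\gamma \in A$, the strategy is to transport the rationality already established over the generating set $D$ across to the generating set $A$. The earlier transducer constructions show that $K_{d_\gamma}$ and ${}_{d_\gamma}K$ are rational relations. Since rational relations (relations recognized by a finite transducer) are closed under inverse and under composition, the relations
\[
L_\gamma = \rel{Q}^{-1} \circ K_{d_\gamma} \circ \rel{Q}
\qquad\text{and}\qquad
{}_\gamma L = \rel{Q}^{-1} \circ {}_{d_\gamma}K \circ \rel{Q}
\]
are rational on $A^* \times A^*$. The point to check is that these compositions really do compute $L_\gamma$ and ${}_\gamma L$; this is precisely the chain of equivalences set out before the theorem, and it rests only on the fact that $\rel{Q}$ relates each word of $K$ to the word of $L$ obtained by expanding its subscripts. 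Because the defining relations \eqref{eq:chineserela}--\eqref{eq:chineserelb} are length-preserving over $A$, right- or left-multiplication by a single generator changes length by exactly $1$, so $\bigl||u|-|v|\bigr| = 1$ on each relation. \fullref{Proposition}{prop:rationalbounded} then yields regularity of all four padded relations $L_\gamma\rpad$, $L_\gamma\lpad$, ${}_\gamma L\rpad$, and ${}_\gamma L\lpad$.

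It remains to handle $a = \emptyword$, where $L_\emptyword = {}_\emptyword L = \{(u,v) : u,v \in L,\ u =_{C_n} v\}$. Here I would invoke that $L$ is a cross-section: the words of $K$ are unique normal forms, and $\rel{Q}$ restricted to $K$ is injective (one recovers the $D$-word by reading off subscripts), so distinct words of $L$ represent distinct elements of $C_n$. Consequently $L_\emptyword$ is the diagonal relation $\{(u,u) : u \in L\}$, which is rational and has length difference $0$; a final application of \fullref{Proposition}{prop:rationalbounded} makes its padded forms regular. With all cases covered, \fullref{Definition}{def:autstruct} is satisfied and $(A,L)$ is a biautomatic structure for $C_n$.

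The genuinely hard work lies behind us, in the two transducer constructions of the previous subsections — especially the left-multiplication transducer, which simulates \fullref{Algorithm}{alg:chineseleft} while reading left to right. In this concluding step the only points requiring care are the bookkeeping of the composition $\rel{Q}^{-1} \circ K_{d_\gamma} \circ \rel{Q}$ (so that it genuinely realizes the change of generating set from $D$ to $A$), the uniform length bound that licenses \fullref{Proposition}{prop:rationalbounded}, and remembering to dispose of the $a = \emptyword$ case, which is easy but needed for the definition.
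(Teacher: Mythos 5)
Your proposal is correct and takes essentially the same route as the paper: the identical composition $\rel{Q}^{-1} \circ K_{d_\gamma} \circ \rel{Q}$ (and likewise for ${}_\gamma L$), the same observation that homogeneity of \eqref{eq:chineserela}--\eqref{eq:chineserelb} gives the length bound $\bigl||u|-|v|\bigr| = 1$, and the same appeal to \fullref{Proposition}{prop:rationalbounded}. Your explicit handling of $a = \emptyword$ via the cross-section property of $L$ is a small, correct addition that the paper leaves implicit, but it does not change the substance of the argument.
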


\section{Hypoplactic monoid}

\subsection{Quasi-ribbon tableaux}

Let $n \in \nset$. Let $A$ be the ordered alphabet $\{1 < 2 < \ldots <
n\}$. Let $\rel{R}$ be the set of defining relations for the Plactic monoid; that is,
\begin{align*}
\rel{R} ={}& \{(acb,cab) : a \leq b < c\} \cup \{(bac, bca) : a < b \leq c\}.
\end{align*}
and let
\[
\rel{S} = \{(cadb,acbd), (bdac,dbca) : a \leq b < c \leq d\}.
\]
The \defterm{hypoplactic monoid of rank $n$}, denoted $H_n$, is
presented by $\pres{A}{\rel{R} \cup \rel{S}}$.

A \defterm{column} is a strictly decreasing word in $A^*$ (that is, a
word $\alpha = \alpha_1\cdots\alpha_k$, where $\alpha_i \in A$, such
that $\alpha_i > \alpha_{i+1}$ for all $i = 1,\ldots,k-1$). Any word
$\alpha \in A^*$ has a decomposition as a product of columns of
maximal length $\alpha = \alpha^{(1)}\cdots\alpha^{(k)}$. Such a word
$\alpha$ is a \defterm{quasi-ribbon word} if the last (smallest) symbol
of $\alpha^{(i+1)}$ is greater than or equal to the first (greatest)
symbol of $\alpha^{(i)}$ for all $i = 1,\ldots,k-1$.

For example, $1\;1\;1\;\allowbreak 21\;2\;\allowbreak
543\;65\;\allowbreak 6\;7\;87$ is a quasi-ribbon word. (For clarity,
spaces indicate the decomposition into columns of maximum length.) Any
quasi-ribbon word has a planar representation as a
\defterm{quasi-ribbon tableau}, where the columns are written
vertically from bottom to top and arranged left to right so that the
last (uppermost) symbol in each column aligns with the first symbol of
the previous column. The quasi-ribbon tableau corresponding to
$1\;1\;1\;\allowbreak 21\;2\;\allowbreak 543\;65\;\allowbreak
6\;7\;87$ is shown in \fullref{Figure}{fig:qrt}; notice that each row
in the quasi-ribbon tableau is non-decreasing.

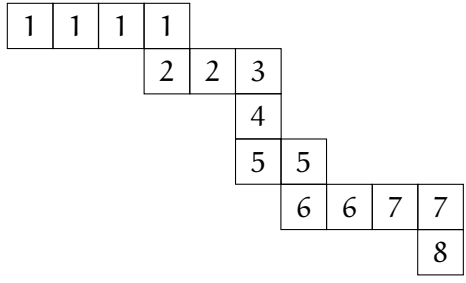
\begin{figure}[t]
\hbox to \hsize{\hss%
\begin{tikzpicture}
\matrix (quasiribbon) [
matrix of math nodes,
nodes={rectangle,draw,minimum width=6mm,minimum height=6mm},
row sep={between borders,-\pgflinewidth},
column sep={between borders,-\pgflinewidth},
]
{
  1 & 1 & 1 & 1 &   &   &   &   &   &   \\
    &   &   & 2 & 2 & 3 &   &   &   &   \\
    &   &   &   &   & 4 &   &   &   &   \\
    &   &   &   &   & 5 & 5 &   &   &   \\
    &   &   &   &   &   & 6 & 6 & 7 & 7 \\
    &   &   &   &   &   &   &   &   & 8 \\
};
\end{tikzpicture}\hss}
\caption{Example of a quasi-ribbon tableau.}
\label{fig:qrt}
\end{figure}

\begin{theorem}[{\cite[Theorem~4.17]{novelli_hypoplactic}}]
\label{thm:qrtcross}
The quasi-ribbon words form a cross-section of the hypoplactic monoid.
\end{theorem}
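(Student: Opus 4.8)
The plan is to prove the two halves of the cross-section statement separately: \emph{existence} (every element of $H_n$ has at least one quasi-ribbon representative) and \emph{uniqueness} (distinct quasi-ribbon words represent distinct elements). Both will be organised around a right-insertion map $P$ sending an arbitrary word over $A$ to a quasi-ribbon word.

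For existence I would define a \emph{quasi-ribbon insertion} of a single letter $a$ into a quasi-ribbon tableau $T$, written $T \leftarrow a$: reading the maximal columns of $T$ from left to right, the letter $a$ is slotted into the unique position that keeps rows non-decreasing and columns strictly decreasing, extending or splitting a column and adjusting the ribbon as required. The central point is that each insertion step realises right multiplication by $a$ \emph{inside} $H_n$, that is, the word read off from $T \leftarrow a$ is obtained from (the word of $T$)$\,a$ by finitely many applications of the defining relations $\rel{R} \cup \rel{S}$. Granting this, if $P(w)$ denotes the tableau obtained by inserting the letters of $w$ one at a time into the empty tableau, then $P(w) =_{H_n} w$, so every word, and hence every element of $H_n$, is represented by a quasi-ribbon word. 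I would also record the easy fact that $P$ fixes quasi-ribbon words: inserting the letters of a quasi-ribbon word in order simply rebuilds its own tableau.

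For uniqueness it suffices to show that $P$ takes the same value on any two words equal in $H_n$; combined with the previous paragraph this gives, for quasi-ribbon words $u$ and $v$ with $u =_{H_n} v$, the chain $u = P(u) = P(v) = v$. Because equality in $H_n$ is generated by the single-step rewrites coming from $\rel{R} \cup \rel{S}$, and because $P(x\ell y) = (P(x) \leftarrow \ell) \leftarrow y$ so that insertion is compatible with concatenation, this reduces to the local claim
\[
T \leftarrow \ell = T \leftarrow r \quad\text{for every quasi-ribbon tableau } T \text{ and every } (\ell,r) \in \rel{R} \cup \rel{S},
\]
where $T \leftarrow w$ means inserting the letters of $w$ successively. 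Establishing this local claim is the heart of the argument, and I expect it to be the main obstacle: one must run the insertion of both sides of each defining relation against an arbitrary tableau and check that the outcomes coincide. The two Knuth relations $acb \leftrightarrow cab$ (with $a \leq b < c$) and $bac \leftrightarrow bca$ (with $a < b \leq c$) involve three letters and should be comparatively routine, whereas the two hypoplactic relations $cadb \leftrightarrow acbd$ and $bdac \leftrightarrow dbca$ (with $a \leq b < c \leq d$) involve four letters and force a more delicate case analysis according to how $a,b,c,d$ interact with the columns already present in $T$.

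Finally I would assemble the pieces. Existence shows that every element of $H_n$ has at least one quasi-ribbon representative, and uniqueness shows it has at most one; hence the quasi-ribbon words form a cross-section of $H_n$. As a sanity check, the content $(|w|_a)_{a \in A}$ is automatically preserved, since every relation in $\rel{R} \cup \rel{S}$ is multi-homogeneous, each right-hand side being a permutation of its left-hand side; so the only data the insertion must get right beyond the content is the placement of the ribbon's descents. An alternative, insertion-free proof of uniqueness could instead isolate an explicit family of descent statistics invariant under $\rel{R} \cup \rel{S}$ and show that, together with the content, they determine the quasi-ribbon word; but the insertion route keeps the bookkeeping uniform.
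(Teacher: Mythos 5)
The paper itself offers no proof of this statement: it is imported directly from Novelli (Theorem~4.17 of \emph{op.~cit.}), and the only related material in the paper is the insertion procedure quoted as \fullref{Algorithm}{alg:novelli}. Your outline is, in approach, exactly the cited source's argument: define a right-insertion map $P$, prove that $P(w)$ represents the same element of $H_n$ as $w$ (existence), and prove that $P$ is constant on $=_{H_n}$-classes and fixes quasi-ribbon words (uniqueness). The surrounding logic of your reduction is sound: compatibility of insertion with concatenation does reduce invariance to the local claim $T \leftarrow \ell = T \leftarrow r$ for each $(\ell,r) \in \rel{R} \cup \rel{S}$, and the chain $u = P(u) = P(v) = v$ correctly extracts uniqueness from it.

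The genuine gap is that both load-bearing verifications are asserted rather than carried out, and together they constitute essentially all of the mathematical content of the theorem. First, existence rests on the claim that each insertion step realises right multiplication inside $H_n$, i.e.\ that the reading of $T \leftarrow a$ is connected to (the reading of $T$)$\,a$ by the defining relations; this requires its own case analysis on where $a$ lands relative to the columns of $T$ (when a column is split, the letters displaced past $a$ must be transported by explicit applications of $\rel{R} \cup \rel{S}$, and one must check the four-letter relations in $\rel{S}$ suffice where the three-letter Knuth relations alone would not, since $H_n$ is a proper quotient of the plactic monoid and the analogous statement with Schensted insertion is genuinely different). You fold this into ``granting this'' without noticing it is comparable in difficulty to the uniqueness half. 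Second, the local claim $T \leftarrow \ell = T \leftarrow r$, which you yourself flag as the main obstacle, is deferred entirely; even the ``easy fact'' that $P$ fixes quasi-ribbon words needs a short inductive check. None of these steps would \emph{fail} — Novelli establishes them all — but as submitted your text is a correct proof skeleton with the substantive case analyses missing, not a proof. Your closing remark about an invariant-based alternative is on the right track: Novelli indeed also characterises hypoplactic equivalence by content together with descent-type data, and completing that route would be a legitimate insertion-free substitute for the uniqueness half, though it too is only sketched here.
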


For any $w \in A^*$, let $Q(w)$ denote the unique quasi-ribbon word
such that $w =_{H_n} Q(w)$.

\begin{theorem}[{\cite[Theorem~5.12]{novelli_hypoplactic}}]
\label{thm:qrtmin}
For any $w \in A^*$, the quasi-ribbon word $Q(w)$ is the
lexicographically minimum word such that $w =_{H_n} Q(w)$. That is,
$Q(w) \leq_{\lex} w$ for all $w \in A^*$.
\end{theorem}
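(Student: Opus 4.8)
The plan is to prove the apparently stronger statement that $Q(w)$ is the lexicographically least word in its $H_n$-class; the asserted inequality $Q(w) \le_\lex w$ then follows at once, since $w$ lies in its own class. Because every relation in $\rel{R} \cup \rel{S}$ preserves length, the $H_n$-class of $w$ is finite, and so it contains a unique lexicographically least element $m$. As $Q$ is constant on $H_n$-classes, it suffices to show that $m$ is itself a quasi-ribbon word: granting this, \fullref{Theorem}{thm:qrtcross} forces $m = Q(m) = Q(w)$, whence $Q(w) = m \le_\lex v$ for every $v$ in the class, which is exactly the claim.

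To control $m$ I would first orient each defining relation so that it strictly decreases words lexicographically. Comparing the two sides of each relation, one checks that $acb <_\lex cab$ (for $a \le b < c$), $bac <_\lex bca$ (for $a < b \le c$), $acbd <_\lex cadb$ and $bdac <_\lex dbca$ (for $a \le b < c \le d$): in each case an early position decides the comparison, namely the first letter (using $a < c$, respectively $b < d$), except for $bac$ versus $bca$, where the first letters agree and the second position decides. Since $\le_\lex$ is left compatible with concatenation and the two sides of every relation have equal length, replacing inside a word the larger side of a relation by the smaller side yields a strictly lex-smaller word of the same length. Consequently the lex-minimal word $m$ can contain \emph{none} of the words $cab$, $bca$, $cadb$, $dbca$ (with their stated side conditions) as a factor, for otherwise a single relation application would produce a strictly smaller member of its class.

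It remains to prove the combinatorial lemma that \emph{any} word avoiding these four families of forbidden factors is a quasi-ribbon word; this is the heart of the argument. I would factor $m$ into its maximal strictly decreasing columns $m = \alpha^{(1)} \cdots \alpha^{(k)}$ and show, for each $i$, that the smallest symbol of $\alpha^{(i+1)}$ is at least the largest symbol of $\alpha^{(i)}$. A short analysis shows that the forbidden length-three factors are exactly the triples whose first symbol strictly exceeds their last while failing to be strictly decreasing; applying this to the triples straddling a column boundary already yields inequalities linking entries of the two adjacent columns at shifted positions. To upgrade these local inequalities to the global condition comparing the top of $\alpha^{(i)}$ with the bottom of $\alpha^{(i+1)}$, I would induct along the pair of columns, at each step invoking the absence of the length-four factors $cadb$ and $dbca$ to transport the inequality past one interior entry; the example $3142$, whose only obstruction is the factor $cadb$ with $1 \le 2 < 3 \le 4$, confirms that the length-four relations are genuinely required.

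The main obstacle is precisely this last lemma. The quasi-ribbon condition compares \emph{non-adjacent} entries --- the largest symbol of one column against the smallest symbol of the next --- whereas the forbidden factors are short contiguous patterns; converting the former into the latter demands a careful induction along two columns, with a case split according to whether the relevant column boundary is an equality or a strict ascent. Everything else --- finiteness of the $H_n$-classes, the four lexicographic comparisons, and the equal-length argument combined with left compatibility of $\le_\lex$ --- is routine.
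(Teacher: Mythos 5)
The paper offers no proof of this statement to compare against --- it is imported verbatim from Novelli \cite[Theorem~5.12]{novelli_hypoplactic} --- so your attempt must stand on its own, and it does not: the lemma you yourself identify as the heart of the argument (``any word avoiding these four families of forbidden factors is a quasi-ribbon word'') is false. Take $w = 3212$ (any rank $n \geq 3$). Its length-three factors $321$ and $212$ match neither $cab$ with $a \leq b < c$ nor $bca$ with $a < b \leq c$ (for $321$ one would need $2 \leq 1$ or $3 \leq 2$; for $212$ one would need $2 < 2$ or $2 \leq 1$), and $w$ itself matches neither $cadb$ nor $dbca$ under $a \leq b < c \leq d$ (as $cadb$ one needs $c \leq d$, i.e.\ $3 \leq 1$; as $dbca$ one needs $b < c$, i.e.\ $2 < 1$). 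So $3212$ avoids all four forbidden families; yet its maximal-column decomposition is $321 \cdot 2$, and $2 \not\geq 3$, so it is not a quasi-ribbon word. Indeed Novelli's insertion (\fullref{Algorithm}{alg:novelli}) gives $Q(3212) = 2132 <_\lex 3212$.

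The failure is structural, not a matter of more care in the two-column induction. Avoiding the lexicographically larger sides of the defining relations only makes a word a \emph{local} minimum under single relation applications, not the global minimum of its (finite) class: the only defining relation applicable anywhere in $3212$ is the Knuth move $212 \equivrl 221$, which produces the lex-\emph{larger} word $3221$, and any path from $3212$ down to the true minimum $2132$ must ascend first (e.g.\ $3212 \equivrl 3221 \equivrl 2312 \equivrl 2132$, the middle step being a quartic relation). So the inclusion you need --- factor-avoiding words are quasi-ribbon --- is strictly violated, and your plan would in effect assert that orienting the defining relations lexicographically yields a confluent rewriting system with quasi-ribbon normal forms; the paper's own \fullref{\S}{subsec:hypoplacticfcrs} points the other way, since its complete system requires all rules $w \imreduces Q(w)$ with $|w| \leq \max\{2n,4\}$ rather than merely the oriented length-three and length-four defining relations. (A cautionary aside: if one takes $\rel{S}$ exactly as printed, the congruence class of $3212$ works out to $\{2321, 3212, 3221\}$, which contains no quasi-ribbon word at all --- the printed condition $a \leq b < c \leq d$ misses the degenerate case $b = c$ needed to identify $2312$ with $3221$; but under either reading of the relations, $3212$ refutes your lemma, and with it the proposed proof.)
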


\begin{algorithm}[{\cite[Algorithm~4.4]{novelli_hypoplactic}}]
\label{alg:novelli}
~\par\nobreak
\textit{Input:} A quasi-ribbon word $w$ and a symbol $a$.

\textit{Output:} The quasi-ribbon word $Q(wa)$.

\textit{Method:}

Let $T$ be the quasi-ribbon tableau corresponding to $w$. If there is
no entry in $T$ that is less than or equal to $a$, output the word
corresponding to the tableau obtained by putting $a$ and gluing $T$ to
the bottom of $a$.

Otherwise, let $x$ be the right-most and bottom-most entry of $T$ that
is less than or equal to $x$. Put a new entry $a$ to the right of $x$
and glue the remaining part of $T$ (below and to the right of $x$)
onto the bottom of the new entry $a$. Output the quasi-ribbon word
corresponding to the new tableau.
\end{algorithm}

The symmetry of the presentation $\pres{A}{\rel{R}\cup\rel{S}}$ and
of the definition of quasi-ribbon words means that
\fullref{Algorithm}{alg:novelli} has the following symmetrical
version, describing how to left-multiply a quasi-ribbon word by a
generator.

\begin{algorithm}
\label{alg:novellileft}
~\par\nobreak
\textit{Input:} A quasi-ribbon word $w$ and a symbol $a$.

\textit{Output:} The quasi-ribbon word $Q(aw)$.

\textit{Method:}

Let $T$ be the quasi-ribbon tableau corresponding to $w$. If there is
no entry in $T$ that is greater than or equal to $a$, output the word
corresponding to the tableau obtained by putting $a$ and gluing $T$ to
the top of $a$.

Otherwise, let $x$ be the left-most and upper-most entry of $T$ that
is greater than or equal to $x$. Put a new entry $a$ to the left of
$x$ and glue the remaining part of $T$ (above and to the left of $x$)
onto the top of the new entry $a$. Output the quasi-ribbon word
corresponding to the new tableau.
\end{algorithm}

\subsection{Finite complete rewriting system}
\label{subsec:hypoplacticfcrs}

Let $\rel{T} = \{w \imreduces Q(w) : w \in A^* \land w \neq Q(w) \land |w| \leq \max\{2n,4\}\}$.

\begin{theorem}
$(A,\rel{T})$ is a finite complete rewriting system presenting $H_n$.
\end{theorem}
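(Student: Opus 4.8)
The plan is to follow the same high-level strategy as for the Chinese monoid: show that $(A,\rel{T})$ is finite and noetherian, that $\pres{A}{\rel{T}}$ presents $H_n$, and that the irreducible words are precisely the quasi-ribbon words. Confluence then comes for free: by \fullref{Theorem}{thm:qrtcross} the quasi-ribbon words form a cross-section of $H_n$, so (using that the congruence classes are exactly the elements of $H_n$) each $\thue_{\rel{T}}$-class contains exactly one irreducible word, and for a noetherian system this forces confluence, exactly as in the Chinese case. Finiteness is immediate, since $A$ is finite and there are only finitely many words of length at most $\max\{2n,4\}$.

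For the noetherian property I would argue as in \fullref{Lemma}{lem:chinesenoetherian}: each rule $w \imreduces_{\rel{T}} Q(w)$ has $Q(w) \leq_\lex w$ by \fullref{Theorem}{thm:qrtmin}, and since $w \neq Q(w)$ this is strict, i.e.\ $w \succ_\lex Q(w)$; moreover $|Q(w)| = |w|$ because the defining relations $\rel{R}\cup\rel{S}$ preserve length. As $\leq_\lex$ is left compatible with concatenation and the two sides of every rule have equal length, each single reduction $x w y \imreduces_{\rel{T}} x\,Q(w)\,y$ strictly decreases the word in $\leq_\lex$ while preserving its length; since there are only finitely many words of each length, no infinite reduction sequence exists. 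To see that $\pres{A}{\rel{T}}$ presents $H_n$, one inclusion $\cgen{\rel{T}} \subseteq {=_{H_n}}$ holds because every rule holds in $H_n$. For the reverse it suffices that each defining relation $(u,v) \in \rel{R}\cup\rel{S}$ satisfies $u \thue_{\rel{T}} v$; this follows since $|u| = |v| \leq 4 \leq \max\{2n,4\}$ and $u =_{H_n} v$ give $u \reduces_{\rel{T}} Q(u)$ and $v \reduces_{\rel{T}} Q(v) = Q(u)$.

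The crux is identifying the irreducible words with the quasi-ribbon words. One direction is quick: if $w$ is a quasi-ribbon word then $w = Q(w)$ is lexicographically least in its class by \fullref{Theorem}{thm:qrtmin}, so $w$ cannot contain the left-hand side $\ell \neq Q(\ell)$ of any rule, since replacing $\ell$ by $Q(\ell)$ would (by length preservation and left compatibility of $\leq_\lex$) produce a strictly $\leq_\lex$-smaller word in the same class; hence $w$ is irreducible. For the converse I must show that every non-quasi-ribbon word has a non-quasi-ribbon factor of length at most $\max\{2n,4\}$, so that it contains the left-hand side of a rule and is reducible. Here I would use the maximal-column decomposition $w = \alpha^{(1)}\cdots\alpha^{(k)}$: if $w$ is not a quasi-ribbon word, some adjacent pair of columns violates the defining inequality (the smallest symbol of $\alpha^{(i+1)}$ is less than the greatest symbol of $\alpha^{(i)}$), and then the two-column factor $\alpha^{(i)}\alpha^{(i+1)}$ is itself non-quasi-ribbon. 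Since each column is strictly decreasing over an $n$-letter alphabet, this factor has length at most $2n \leq \max\{2n,4\}$, so $w$ is reducible.

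This combinatorial bound is the main obstacle, and it is exactly what dictates the threshold $\max\{2n,4\}$: the word $n(n-1)\cdots 1\,n(n-1)\cdots 1$ (for instance $2121$ when $n=2$) is non-quasi-ribbon of length $2n$ while all of its proper factors are quasi-ribbon, so no smaller threshold would capture it, and the additional lower bound $4$ is needed to capture the length-$4$ relations of $\rel{S}$ for small $n$. With irreducibles identified as quasi-ribbon words, the cross-section property of \fullref{Theorem}{thm:qrtcross} yields a unique irreducible word per congruence class; together with the noetherian property this gives confluence, and hence completeness.
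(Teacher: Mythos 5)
Your proof is correct and follows essentially the same route as the paper: finiteness is immediate; termination comes from strict $\leq_\lex$-decrease together with length preservation; the presentation claim uses that all rules of length at most $4$ are present; and the key step is the same maximal-column argument showing every word that is not quasi-ribbon contains a non-quasi-ribbon two-column factor of length at most $2n$, after which confluence follows from the cross-section property (\fullref{Theorem}{thm:qrtcross}), with your explicit ``quasi-ribbon $\Rightarrow$ irreducible'' direction via \fullref{Theorem}{thm:qrtmin} being a harmless extra that the paper does not need. One caveat: your side remark that no smaller threshold would work, based on the claim that every proper factor of $n(n-1)\cdots 1\,n(n-1)\cdots 1$ is quasi-ribbon, is false for $n \geq 3$ (for $n=3$, the proper factor $21321$ of $321321$ decomposes into columns $21$ and $321$ and violates the quasi-ribbon condition); this does not affect the theorem, since completeness only requires the threshold to be large enough, not minimal.
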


\begin{proof}
First of all notice that every rules in $\rel{T}$ holds in $H_n$ and
thus is a consequence of the relations in $\rel{R} \cup \rel{S}$. On the other
hand, every relation in $\rel{R} \cup \rel{S}$ is a consequence of the rules in
$\rel{T}$ (since $\rel{T}$ includes all rules $w \imreduces Q(w)$ for
all $|w| \leq 4$). Hence $\pres{A}{\rel{T}}$ presents $H_n$.

Next, notice that there are only finitely many rules in $\rel{T}$
since there are only finitely many possibilities for $w$ and $Q(w)$ is
uniquely determined.

Suppose $u \imreduces_\rel{T} v$. Then $|u| = |v|$ and $v <_{\lex} u$
by \fullref{Theorem}{thm:qrtmin} (since a rule $w \imreduces Q(w)$ is
applied where $w \neq Q(w)$ and $Q(w) \leq_{\lex} w$ for all $w \in
A^*$). Hence $(A,\rel{T})$ is acyclic. Furthermore, since $|w| =
|Q(w)|$, rewriting using $(A,\rel{T})$ preserves lengths, and so for
each $u \in A^*$, there are only finitely many words $v$ such that $u
\reduces_{\rel{T}} v$; thus $(A,\rel{T})$ is globally finite. Hence
$(A,\rel{T})$ is noetherian.

Let $u \in A^*$. Since $(A,\rel{T})$ is noetherian, applying rewriting
rules from $\rel{T}$ to $u$ will always yield an irreducible word
$v$.

With the aim of obtaining a contradiction, suppose that $v$ is not a
quasi-ribbon word. Let $v = v^{(1)}v^{(2)}\cdots v^{(k)}$ be the
decomposition of $v$ into columns of maximum length. Since $v$ is not
a quasi-ribbon word, for some $i$ the word $v^{(i)}v^{(i+1)}$ is not a
quasi-ribbon word. Since columns are strictly descending words, they
have maximum length $n$. So $v^{(i)}v^{(i+1)}$ has length at most
$2n$. Since $v^{(i)}v^{(i+1)}$ is not a quasi-ribbon word,
$v^{(i)}v^{(i+1)} \neq Q(v^{(i)}v^{(i+1)})$. Hence $v^{(i)}v^{(i+1)}
\imreduces Q(v^{(i)}v^{(i+1)})$ is a rule in $\rel{T}$. Since $v$
contains the left-hand side of this rule, it is not irreducible, which
is a contradiction. This proves that $v$ is a quasi-ribbon word.

Hence the set of irreducible words for $(A,\rel{T})$ are the
quasi-ribbon words, which form a cross-section of $H_n$ by
\fullref{Theorem}{thm:qrtcross}. Hence $(A,\rel{T})$ is confluent.
\end{proof}

\subsection{Biautomaticity}
\label{subsec:hypoplacticbiauto}

\subsubsection{Right- and left-multiplication by transducer}

Let
\[
C = \{c_\alpha : \text{$\alpha \in A^+$ is a column}\}.
\]
Each symbol $c_\alpha$ represents the element $\alpha$ of $H_n$. Since
this holds in particular when $|\alpha| = 1$, the original generating
set $A$ is essentially contained in $C$. Thus $C$ also generates $H_n$.

Define a relation $\preceq$ on columns by $\alpha \preceq\beta$ if the
first (greatest) symbol of $\alpha$ is less than or equal to the last
(smallest) symbol of $\beta$. That is, $\alpha \preceq \beta$ if
$\alpha$ can appear immediately to the left of $\beta$ in the
decomposition of a quasi-ribbon word into maximal columns.

Now let
\[
K = \bigl\{c_{\alpha^{(1)}}c_{\alpha^{(2)}}\cdots c_{\alpha^{(k)}} : k \in \nset \cup \{0\}, c_{\alpha^{(i)}} \in C, {\alpha^{(j)}} \preceq {\alpha^{(j+1)}} \text{ for all $j$}\bigr\}.
\]
Notice that $K$ is regular since an automaton checking whether
$c_{\alpha^{(1)}}c_{\alpha^{(2)}}\cdots c_{\alpha^{(k)}} \in C^*$ lies
in $K$ need only store the previous letter in its state in order to
check whether ${\alpha^{(j)}} \preceq {\alpha^{(j+1)}}$ for all $j$.

We will first of all prove that for any $\gamma \in A$ the relation
$K_{c_{\gamma}}$ is recognized by a finite transducer.

We imagine a transducer reading a pair of words
\[
(c_{\alpha^{(1)}}\cdots c_{\alpha^{(k)}},c_{\beta^{(1)}}\cdots c_{\beta^{(l)}}) \in K \times K
\]
\emph{from right to left}, with the aim of checking whether this pair
is in $K_{c_{\gamma}}$. It is easiest to describe the transducer as
reading symbols from the top tape and outputting symbols on the bottom
tape.  Essentially, the transducer will perform
\fullref{Algorithm}{alg:novelli} using the alphabet $C$ to store
columns quasi-ribbon words.

The transducer non-deterministically looks one symbol ahead (that is,
further left) on the top tape. In its state it stores either
$\gamma$ or $\infty$, with the latter indicating that $\gamma$ has
already been inserted into the quasi-ribbon. 

While the transducer is storing $\gamma$ in its state, it examines
each column $c_{\alpha^{(j)}}$ it reads and proceeds as follows:
\begin{itemize}
\item If $\gamma$ is less than every symbol of $\alpha^{(j)}$ and $j >
  1$ and $\gamma$ is less than the first symbol of $\alpha^{(j-1)}$,
  then the transducer outputs $c_{\alpha^{(j)}}$ and proceeds to read
  the next symbol $c_{\alpha^{(j-1)}}$. (Notice that the transducer
  knows whether $j > 1$ since it non-deterministically looks ahead one
  symbol.)
\item If $\gamma$ is greater than or equal to some symbol of
  $\alpha^{(j)}$, then the transducer outputs
  $c_{\beta'}c_{\beta\gamma}$, where $\alpha^{(j)} = \beta\beta'$ and
  the first letter of $\beta'$ is the first symbol of $\alpha^{(j)}$
  that is less than or equal to $\gamma$. It then stores $\infty$ in
  its state in place of $\gamma$.
\item If $\gamma$ is less than every symbol of $\alpha^{(j)}$ and
  either $j=1$ or $j > 1$ and $\gamma$ is greater than or equal to the
  first symbol of $\alpha^{(j-1)}$, then the transducer outputs
  $c_{\alpha^{(j)}\gamma}$.  It then stores $\infty$ in its state in
  place of $\gamma$.
\end{itemize}
When the transducer is storing $\infty$ in its state, it simply reads
every input symbol and outputs the same symbol until it reaches the
end of the input.

Finally, if the top tape is the empty word $\emptyword$, the
transducer simply outputs $\gamma$.

Since $K_{c_\gamma}$ is recognized by a finite transducer, it is a
rational relation.

Since \fullref{Algorithms}{alg:novelli} and \ref{alg:novellileft} are
symmetric, it is clear that ${}_{c_\gamma}K$ is also recognized by a
finite transducer (which effectively performs
\fullref{Algorithm}{alg:novellileft} using the alphabet $C$ to store
column quasi-ribbon words). So ${}_{c_\gamma}K$ is a rational
relation.

\subsubsection{Deducing biautomaticity}

Let $\rel{Q} \subseteq C^* \times A^*$ be the relation
\[
\bigl\{(c_{\alpha^{(1)}}c_{\alpha^{(2)}}\cdots c_{\alpha^{(k)}},\alpha^{(1)}\alpha^{(2)}\cdots\alpha^{(k)}) : k \in \nset\cup \{0\}, \text{each $\alpha^{(i)}$ is a column}\bigr\}.
\]
It is easy to see that $\rel{Q}$ is a rational relation. Let
\[
L = K \circ \rel{Q} = \bigl\{u \in A^* : (\exists u' \in K)\bigl((u',u) \in \rel{Q}\bigr)\bigr\}.
\]
Then $L$ is a regular language over $A$ that maps onto $M_n$,
since the set of regular languages is closed under applying rational
relations. (In fact, $L$ is the set of quasi-ribbon words, but this is
not important for proving biautomaticity.) Then for any $\gamma \in
A$,
\begin{align*}
(u,v) \in L_\gamma &\iff u \in L \land v \in L \land u\gamma =_{H_n} v\\
&\iff (\exists u',v' \in K)((u',u) \in \rel{Q} \land (v,v') \in \rel{Q} \land u'c_\gamma =_{H_n} v')\\
&\iff (\exists u',v' \in K)((u',u) \in \rel{Q} \land (v,v') \in \rel{Q} \land (u',v') \in K_{c_\gamma})\\
&\iff (u,v) \in \rel{Q}^{-1} \circ K_{c_\gamma} \circ \rel{Q}.
\end{align*}
Therefore, $L_\gamma$ is a rational relation. Now, if $(u,v) \in
L_\gamma$, then $|v| = |u|+1$ since $u\gamma =_{H_n} v$ and the
defining relations $\rel{R} \cup \rel{S}$ preserve lengths of words.
By \fullref{Proposition}{prop:rationalbounded}, $L_\gamma\rpad$ and
$L_\gamma\lpad$ are regular.

Similarly, from the fact that ${}_{c_\gamma}K$ is a rational relation,
we deduce that ${}_\gamma L = \rel{Q}^{-1} \circ {}_{d_\gamma}K \circ
\rel{Q}$ is a rational relation and thus, by
\fullref{Proposition}{prop:rationalbounded}, that ${}_\gamma L\rpad$
and ${}_\gamma L\lpad$ are regular.

Thus we have proved the desired result:

\begin{theorem}
\label{thm:hypoplacticbiauto}
$(A,L)$ is a biautomatic structure for the hypoplactic monoid $H_n$.
\end{theorem}

\section{Sylvester monoid}

Let $n \in \nset$. Let $A$ be the finite ordered alphabet $\{1 < 2 <
\ldots < n\}$. Let $\rel{R}$ be the (infinite) set of defining relations
\[
\{(cavb,acvb) : a \leq b < c, v \in A^*\}.
\]
Then the \defterm{sylvester monoid of rank $n$}, denoted $S_n$, is
presented by $\pres{A}{\rel{R}}$. This is simply a restriction to
finite rank of the sylvester monoid as defined by Hivert et
al.~\cite[Definition~8]{hivert_algebra}.

\subsection{Complete rewriting system}
\label{subsec:sylvestercrs}

The aim of this section is to prove that $(A,\rel{R})$ is a complete
rewriting system. The proof will depend on two results proved by
Hivert et~al.~(\fullref{Propositions}{prop:bstcrossec} and
\ref{prop:lrpbstlexmin} below), and we need to define some new
concepts first.

A \defterm{(right strict) binary search tree} is a labelled rooted
binary tree where the label of each node is greater than or equal to
the label of every node in its left subtree, and strictly less than
every node in its right subtree; see the example in
\fullref{Figure}{fig:exbst}.

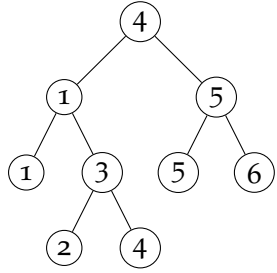
\begin{figure}[t]
\centering
\begin{tikzpicture}[every node/.style={circle,draw,inner sep=.7mm},level distance=10mm,level 1/.style={sibling distance=20mm},level 2/.style={sibling distance=10mm}]
  \node (root) {4}
    child { node (0) {1}
      child { node (00) {1} }
      child { node (01) {3} 
        child { node (010) {2} }
        child { node (011) {4} } } }
    child { node (1) {5}
      child { node (10) {5} }
      child { node (11) {6} } };
\end{tikzpicture}
\caption{Example of a binary search tree $T$. The root has label $4$,
  so every label in the left subtree of the root is less than or equal
  to $4$ (and indeed the label $4$ does occur) and every label in the
  right subtree of the root is strictly greater than $4$. Notice that
  (for example) $T = \bst(265415314)$, and that $LRP(T) =
  124315654$.}
\label{fig:exbst}
\end{figure}

Given a binary search tree $T$ and a symbol $a \in A$, one inserts $a$
into $T$ as follows: if $T$ is empty, create a node and label it
$a$. If $T$ is non-empty, examine the label $x$ of the root node; if
$a \leq x$, recursively insert $a$ into the left subtree of the root
node; otherwise recursively insert $a$ into the right subtree of the
root note. Denote the resulting tree $a \cdot T$. It is easy to see
that $a\cdot T$ is also a binary search tree.

Given any word $w \in A^*$, define its corresponding binary search
tree $\bst(w)$ as follows: start with the empty tree and iteratively
insert the symbols in $w$ from right to left; again, see the example
in \fullref{Figure}{fig:exbst}.

The left-to-right postfix reading $LRP(T)$ of a binary search tree $T$
is defined to be the word obtained as follows: recursively perform the
left-to-right postfix reading of the left subtree of the root of $T$,
then recursively perform the left-to-right postfix reading of the
right subtree of the root of $T$, then output the label of the root of
$T$; again, see the example in \fullref{Figure}{fig:exbst}. Note that
$\bst(LRP(T)) = T$.

\begin{proposition}[{\cite[Theorem~10]{hivert_algebra}}]
\label{prop:bstcrossec}
Let $w,w' \in A^*$. Then $w =_{S_n} w'$ if and only if $\bst(w) =
\bst(w')$.
\end{proposition}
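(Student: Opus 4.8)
The plan is to prove both implications separately. For the forward implication I would show that the map $w \mapsto \bst(w)$ is invariant under the defining relations, so that it is constant on each $=_{S_n}$-class. For the converse I would show that every word satisfies $w =_{S_n} LRP(\bst(w))$; then $\bst(w) = \bst(w')$ forces the words $LRP(\bst(w))$ and $LRP(\bst(w'))$ to be literally equal, whence $w =_{S_n} LRP(\bst(w)) = LRP(\bst(w')) =_{S_n} w'$.

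For the forward implication, since $=_{S_n}$ is the congruence generated by $\rel{R}$, it suffices to treat a single rewrite $p\,cavb\,q \to p\,acvb\,q$ with $a \le b < c$. Because $\bst$ inserts letters from right to left, inserting the suffix $q$, then $b$, then $v$ produces a common tree $T'$ in both cases; the two words then call for inserting $a$ then $c$ versus $c$ then $a$, after which the prefix $p$ is inserted identically. So the whole direction reduces to a commutation lemma: if $a < c$ and $T'$ contains a node whose label $b$ satisfies $a \le b < c$, then $a \cdot (c \cdot T') = c \cdot (a \cdot T')$. To prove this I would track the search paths of $a$ and of $c$: at a node labelled $x$ both turn left if $c \le x$, both turn right if $a > x$, and they diverge exactly when $a \le x < c$. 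A short interval-tracking argument shows that a node labelled $b \in [a,c)$ forces a divergence, since every node discarded while the two paths still agree has label $\ge c$ or $< a$, hence outside $[a,c)$; were there no divergence, $b$ would be pushed into the current subtree at every step and so would have to sit on the common path, which is impossible. Once the paths diverge at some node $z$, the letters $a$ and $c$ enter disjoint subtrees of $z$, so the insertions are independent and commute.

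For the converse, the engine is the sub-claim that for any binary search tree $T$ and any $a \in A$ one has $a \cdot LRP(T) =_{S_n} LRP(a \cdot T)$, proved by induction on the size of $T$. Writing $T$ with root $x$, left subtree $L$, and right subtree $R$, so that $LRP(T) = LRP(L)\,LRP(R)\,x$, the case $a \le x$ is immediate from the inductive hypothesis applied to $L$ together with the congruence property. The case $a > x$ is where the sylvester relation does its work: every label in $L$ is $\le x < a$, so every letter $a'$ of $LRP(L)$ satisfies $a' \le x < a$, and the trailing letter $x$ of $LRP(T)$ serves as a witness $b$ with $a' \le x < a = c$. Repeated application of $cavb \to acvb$ with $b = x$ therefore drags $a$ rightwards past the whole of $LRP(L)$, turning $a\cdot LRP(T) = a\,LRP(L)\,LRP(R)\,x$ into $LRP(L)\,a\,LRP(R)\,x$; the inductive hypothesis applied to $R$ then rewrites the middle block $a\,LRP(R)$ to $LRP(a\cdot R)$, yielding $LRP(L)\,LRP(a\cdot R)\,x = LRP(a\cdot T)$. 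Feeding this sub-claim into an induction on $|w|$, splitting off the first letter and using $\bst(w) = w_1 \cdot \bst(w')$, gives $w =_{S_n} LRP(\bst(w))$ for every $w$, completing the converse.

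I expect the main obstacle to be the commutation lemma in the forward direction, and specifically the combinatorial claim that a single node labelled $b \in [a,c)$ guarantees the two search paths diverge; the interval bookkeeping there needs care. By contrast, the choice of witness $b = x$ in the converse, once spotted, makes that direction essentially routine.
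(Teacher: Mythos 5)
Your argument is correct, but there is nothing in the paper to compare it against: the paper does not prove this proposition at all, quoting it as Theorem~10 of Hivert, Novelli \& Thibon, so you have in effect supplied the missing proof. Both halves check out. In the forward direction, reading $cavb$ and $acvb$ right to left does reduce everything to the commutation $a\cdot(c\cdot T')=c\cdot(a\cdot T')$ for a tree $T'$ containing a label in $[a,c)$, and your divergence analysis is sound: the two search paths agree at a node labelled $x$ precisely when $x\geq c$ or $x<a$; every node of $T'$ lies either on the undiverged common path or in a subtree discarded at a common step, with label outside $[a,c)$ in each case; so a node labelled $b$ with $a\leq b<c$ forces a divergence node $z$, below which $a$ and $c$ enter disjoint subtrees of $z$ and the two insertions manifestly commute. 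The one point you should make explicit is that such a node $b$ is guaranteed to be present in $T'$ precisely because $\bst$ inserts from right to left, so $b$ enters the tree before $a$ and $c$ do. In the converse, the sub-claim $a\cdot LRP(T)=_{S_n}LRP(a\cdot T)$ with the root label $x$ (the trailing letter of $LRP(T)$) as the witness $b$ is the right engine: when $a>x$, every letter $a'$ of $LRP(L)$ satisfies $a'\leq x<a$, so the trailing $x$ licenses an application of $cavb\imreduces_{\rel{R}}acvb$ at each step, and the congruence property lets you apply the inductive hypothesis inside the context $LRP(L)\,[\cdot]\,x$. It is worth noting that this dragging computation is exactly the mechanism the paper itself exploits later (\fullref{Lemma}{lem:sylvesterlrrewrite} and the relations $H_b$ in \fullref{\S}{sec:sylvesterleft}), and that your sub-claim $w=_{S_n}LRP(\bst(w))$ is the combinatorial fact that also underlies the quoted \fullref{Proposition}{prop:lrpbstlexmin}; in substance your route coincides with the original argument of Hivert et al., so the proposal stands as a correct, self-contained replacement for the citation.
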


\begin{proposition}[{\cite[Proposition~15]{hivert_algebra}}]
\label{prop:lrpbstlexmin}
Let $w \in A^*$. Then $LRP(\bst(w))$ is the lexicographically minimal
word representing the same element of $S_n$ as $w$.
\end{proposition}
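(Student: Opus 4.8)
The plan is to reduce the statement to a purely combinatorial claim about binary search trees and then prove that claim by induction on the number of nodes. First I would record that $LRP(\bst(w))$ really represents the same element as $w$: since $\bst(LRP(T)) = T$ for every tree $T$, we have $\bst(LRP(\bst(w))) = \bst(w)$, so $LRP(\bst(w)) =_{S_n} w$ by \fullref{Proposition}{prop:bstcrossec}. Using the same proposition, the set of words representing the same element as $w$ is exactly $\{w' \in A^* : \bst(w') = \bst(w)\}$. Moreover, since the defining relations $\rel{R}$ preserve length, all these words have the same length $|w|$, so $\leq_\lex$ restricted to this set is determined purely by the first position of disagreement (no proper-prefix case arises). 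Thus it suffices to prove: \emph{for every binary search tree $T$, the word $LRP(T)$ is $\leq_\lex$-minimal among all $w'$ with $\bst(w') = T$.}

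Next I would establish the structural description of the fibre $\bst^{-1}(T)$ that drives the induction. Write $T$ as a root labelled $x$ with left subtree $L$ and right subtree $R$, so that $LRP(T) = LRP(L)\,LRP(R)\,x$. Because $\bst$ inserts symbols from right to left, the \emph{first} symbol inserted --- the rightmost symbol of $w'$ --- becomes the root; hence every $w'$ with $\bst(w') = T$ ends in $x$. Deleting this final $x$, each remaining symbol that is $\leq x$ is routed into the left subtree and each symbol $> x$ into the right subtree, with relative order preserved. Consequently $w' = s\,x$, where $s$ is an arbitrary order-preserving interleaving (shuffle) of a word $u$ with $\bst(u) = L$ and a word $v$ with $\bst(v) = R$; conversely, any such $s\,x$ satisfies $\bst(s\,x) = T$. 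The key numerical fact I would extract is that every symbol of $u$ is $\leq x$ while every symbol of $v$ is strictly greater than $x$, so \emph{every symbol of $u$ is strictly smaller than every symbol of $v$}; also $|u| = |L|$ and $|v| = |R|$ are fixed.

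I would then minimise in two stages. For fixed $u$ and $v$, among all shuffles the lexicographically smallest is $uv$: reading left to right, the front symbol of $u$ is always strictly smaller than the front symbol of $v$, so a first-difference argument forces one to exhaust $u$ before beginning $v$. Hence the minimum over the whole fibre equals the minimum of $u\,v\,x$ over all valid $u,v$. Because every $u$ has the same length $|L|$, the first $|L|$ symbols of $u\,v\,x$ are exactly $u$, so the minimisation separates: one first minimises $u$ over $\{u : \bst(u)=L\}$, then $v$ over $\{v : \bst(v)=R\}$. Both $L$ and $R$ have strictly fewer nodes than $T$, so the induction hypothesis gives minimisers $LRP(L)$ and $LRP(R)$ (each realised, since $\bst(LRP(L))=L$ and $\bst(LRP(R))=R$). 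The overall minimiser is therefore $LRP(L)\,LRP(R)\,x = LRP(T)$, completing the induction; the base case is the empty or single-node tree, whose fibre is a single word.

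The main obstacle is the structural lemma of the second paragraph: one must argue carefully that the fibre $\bst^{-1}(T)$ is \emph{exactly} the set of shuffles of representatives of $L$ and $R$ capped by $x$, in both directions, including the verification that the symbols routed left are precisely those $\leq x$ and that the insertion order within each subtree matches the definition of $\bst$. Once this description and the strict inequality between left-symbols and right-symbols are in hand, the lexicographic minimisation (greedy shuffle together with separation by the fixed block length $|L|$) is routine.
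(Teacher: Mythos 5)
Your argument is correct, but there is nothing internal to compare it against: the paper does not prove this proposition, it imports it verbatim from Hivert, Novelli \& Thibon \cite[Proposition~15]{hivert_algebra}, just as it imports \fullref{Proposition}{prop:bstcrossec}. So what you have produced is a self-contained proof of a black-box ingredient, and it holds up. The reduction to minimality of $LRP(T)$ on the fibre $\bst^{-1}(T)$ via \fullref{Proposition}{prop:bstcrossec} is right (and note the proper-prefix case of $\leq_\lex$ is excluded even more directly than by homogeneity of $\rel{R}$: every word in the fibre has length equal to the number of nodes of $T$). Your structural lemma is the crux and is sound: the last letter of $w'$ is the root label since right-to-left insertion places it first and insertion never relabels nodes; letters $\leq x$ are routed left and letters $> x$ right with relative order preserved, so the fibre is exactly the set of shuffles of a word in $\bst^{-1}(L)$ with a word in $\bst^{-1}(R)$, capped by $x$, and the two choices are independent, which is what lets the minimisation split into a product. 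The greedy step works precisely because the separation is strict (left letters $\leq x <$ right letters); one small point worth making explicit there is that this strict separation is also what guarantees, in the first-difference comparison of a shuffle $s$ with $uv$, that a matching prefix of values consists of the same $u$-letters in both words, so the next letter of $s$ is either $u_i$ or some $v_j > u_i$. The block-length separation ($|u| = |L|$ fixed across the fibre of $L$) and the induction on node count then finish it. This recursive shuffle description is essentially the ``sylvester class as linear extensions'' picture underlying the original proof of Hivert et al., recast as an induction, and it is a perfectly good elementary route.
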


\begin{proposition}
\label{prop:sylvesterfcrs}
$(A,\rel{R})$ is a complete rewriting system, and the irreducible
words are the lexicographically minimal words representing elements of
$S_n$.
\end{proposition}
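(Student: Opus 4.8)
The plan is to establish completeness by proving noetherianity and confluence separately, and to identify the irreducible words using \fullref{Proposition}{prop:lrpbstlexmin}. The key observation is that each rewriting rule $cavb \imreduces acvb$ (for $a \leq b < c$ and $v \in A^*$) is length-preserving and, crucially, lexicographically decreasing: replacing $ca\ldots$ by $ac\ldots$ at the front of the rewritten subword strictly lowers the word in $\leq_\lex$, since $a < c$. So first I would show that $u \imreduces_{\rel{R}} u'$ implies $u' <_\lex u$. Because $\leq_\lex$ is left compatible with concatenation (as noted in the preliminaries), it suffices to check that each rule decreases its left-hand side lexicographically, which is immediate. This gives acyclicity. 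Combined with the fact that rewriting preserves length (so for each $u$ there are only finitely many words of the same length, hence only finitely many $u'$ with $u \reduces_{\rel{R}} u'$), the system is globally finite, and therefore noetherian by \cite[Lemma~2.2.5]{baader_termrewriting}.

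Next I would identify the irreducible words. A word $w$ is irreducible precisely when it contains no factor of the form $cavb$ with $a \leq b < c$ and $v \in A^*$. The claim is that these are exactly the words of the form $LRP(\bst(w))$, that is, the lexicographically minimal representatives. For this I would argue that $w = LRP(\bst(w))$ if and only if $w$ is $\leq_\lex$-minimal in its $=_{S_n}$-class (by \fullref{Proposition}{prop:lrpbstlexmin}), and then connect minimality to irreducibility. One direction is easy: if $w$ is reducible, say $w \imreduces_{\rel{R}} w'$, then $w' <_\lex w$ and $w' =_{S_n} w$, so $w$ is not $\leq_\lex$-minimal in its class, hence $w \neq LRP(\bst(w))$. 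The reverse direction is the substantive part: I must show every word that is not $\leq_\lex$-minimal is reducible, equivalently that every irreducible word is $\leq_\lex$-minimal. I would do this by showing that from any word $w$, iterated reduction reaches $LRP(\bst(w))$; since reduction preserves the $=_{S_n}$-class and terminates (noetherianity), and since $LRP(\bst(w))$ is the unique minimal representative, any irreducible word reachable from $w$ must coincide with it.

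Finally, confluence follows cheaply. Since the system is noetherian, each word $w$ has at least one normal form. By \fullref{Proposition}{prop:bstcrossec}, $w =_{S_n} w'$ if and only if $\bst(w) = \bst(w')$, so the $=_{S_n}$-classes are exactly the fibres of $\bst$, and by \fullref{Proposition}{prop:lrpbstlexmin} each class contains a \emph{unique} $\leq_\lex$-minimal word, namely $LRP(\bst(w))$. Having shown that the irreducible words are precisely these minimal representatives, each Thue class contains exactly one irreducible word; a noetherian system whose every equivalence class contains a unique irreducible element is automatically confluent. This yields completeness and the desired description of the normal forms.

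I expect the main obstacle to be the reverse direction in the second paragraph: showing that every non-minimal word is genuinely reducible by a rule in $\rel{R}$, rather than merely representing a non-minimal class. The natural route is to prove that repeatedly applying rules drives any $w$ down to $LRP(\bst(w))$, so I would want a lemma asserting that a word failing to be of the form $LRP(\bst(\cdot))$ must contain a forbidden factor $cavb$; this is where the combinatorial structure of postfix readings of binary search trees does the real work, and care is needed to exhibit such a factor explicitly from a local violation of the tree-reading shape.
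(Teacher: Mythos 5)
Your overall architecture coincides with the paper's: noetherianity via the lexicographic order (each rule $cavb \imreduces_{\rel{R}} acvb$ is length-preserving and lexicographically decreasing, so the system is acyclic and globally finite, hence noetherian by the cited lemma), and confluence deduced at the end from noetherianity plus the fact that each $=_{S_n}$-class contains a unique irreducible word, using \fullref{Propositions}{prop:bstcrossec} and \ref{prop:lrpbstlexmin}. The easy direction of the identification of irreducibles --- a reducible word is not lexicographically minimal, so every word of the form $LRP(\bst(w))$ is irreducible --- is also exactly the paper's argument.

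However, the substantive half is missing, and you flag it yourself: you never actually prove that every irreducible word is of the form $LRP(\bst(\cdot))$, equivalently that every word not of this form contains a factor $cavb$ with $a \leq b < c$. Moreover, your proposed route --- ``show that iterated reduction from any $w$ reaches $LRP(\bst(w))$'' --- is circular as stated: noetherianity guarantees that reduction terminates at \emph{some} irreducible word in the class of $w$, but identifying that terminal word with $LRP(\bst(w))$ is precisely the claim at issue, so this reformulation buys nothing. The paper closes the gap with a short induction on word length: if $w = w'b$ is irreducible (with $b \in A$ its last letter), then $w'$ contains no factor $ca$ with $a \leq b < c$, and hence $w'$ factors as $uv$ where every symbol of $u$ is $\leq b$ and every symbol of $v$ is $> b$; by induction $u = LRP(T_u)$ and $v = LRP(T_v)$ for binary search trees $T_u$, $T_v$, and the tree with root labelled $b$, left subtree $T_u$, and right subtree $T_v$ is a binary search tree whose left-to-right postfix reading is $uvb = w$. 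Until you supply this (or an equivalent) combinatorial argument, the proof is incomplete --- though you correctly located where the real work lies, and everything surrounding that hole matches the paper's proof.
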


\begin{proof}
Notice that an application of a rewriting rule from $\rel{R}$ strictly
decreases a word with respect to the lexicographic order. Hence
$(A,\rel{R})$ is acyclic. Since it does not alter the length of a
word, and since there are only finitely many words of a given length,
$(A,\rel{R})$ is globally finite. Hence $(A,\rel{R})$ is noetherian.

To see that $(A,\rel{R})$ is confluent, we prove that the irreducible
words are precisely those words that arise as left-to-right
postfix readings of binary search trees.

First, let us show that any word that arises as a
left-to-right-postfix reading of a binary search tree must be
irreducible. Let $w$ be a left-to-right postfix reading of some binary
search tree. That is, $w = LRP(\bst(w))$. Then by
\fullref{Proposition}{prop:lrpbstlexmin}, $w$ is the lexicographically
minimal word among all words representing the same element of $S_n$ as
$w$. Since an application of a rewriting rule from $\rel{R}$ always
decreases a word in the lexicographic order, it follows that no rule
in $\rel{R}$ can be applied to $w$. Thus $w$ is irreducible.

Now we prove that any irreducible word arises as a left-to-rise
postfix reading of a binary search tree.  We proceed by induction on
the length of the word. First, if $w \in A^*$ has length less than
$3$, no relation in $\rel{R}$ can be applied to $w$ and hence there is
no other word in $A^*$ representing the same element of $S_n$; thus $w
= LRP(\bst(w))$. This proves the base of the induction.

Now suppose that all irreducible words of length less than $k$ arise
from left-to-right postfix readings of binary search trees. Let $w$ be
an irreducible word of length $k$. Let $w = w'b$, where $b \in
A$. Since $w$ is irreducible, there is no left-hand side of a rule in
$\rel{R}$ in $w$, and thus there is no subword $ca$ in $w'$ such that
$a \leq b < c$. Therefore $w'$ factors as $w' = uv$, where every
symbol in $u$ is less than or equal to $b$ and every symbol in $v$ is
greater than $b$. The words $u$ and $v$ are irreducible and of length
less than $k$, and thus $u = LRP(T_u)$ and $v = LRP(T_v)$ for some
binary search trees $T_u$ and $T_v$. Form a new tree $T$ whose root is
labelled by $b$ and has left subtree $T_u$ and right subtree
$T_v$. Since every symbol in $u$ is less or equal to than $b$ and
every symbol in $v$ is greater than $v$, it follows that $T$ is a
binary search tree. Finally, by the definition of left-to-right
postfix reading, we have $LRP(T) = LRP(T_u)LRP(T_v)b = uvb = w$. This
completes the induction step.

Thus irreducible words are precisely those arising from left-to-right
postfix readings of binary search trees, which are precisely the
lexicographically minimal representatives of the elements of $S_n$ by
\fullref{Proposition}{prop:lrpbstlexmin}. Hence there is a unique
irreducible representative of each element of $S_n$, and so
$(A,\rel{R})$ is confluent.
\end{proof}

As a consequence of \fullref{Proposition}{prop:sylvesterfcrs}, $S_n$
admits a regular language of unique normal forms, namely the language
of words that do not include a left-hand side of a rule in $\rel{R}$:
\[
L = A^* - A^*\{cavb : a \leq b < c, v \in A^*\}A^*.
\]
Thus $L$ is the language of irreducible words of the rewriting system
$(A,\rel{R})$.

\subsection{Biautomaticity}
\label{subsec:sylvesterbiautomatic}

The aim of this section is to prove that $(A,L)$ is a biautomatic
structure for $S_n$. It is necessary to prove that ${}_\gamma L\lpad$,
${}_\gamma L\rpad$, $L_\gamma\lpad$, and $L_\gamma\rpad$ are regular.

\subsubsection{Left-multiplication}
\label{sec:sylvesterleft}

We begin by analyzing the reduction that can occur when we
left-multiply a normal form word by a single generator and then
rewrite back to a normal form word. Let $w \in L$ and $\gamma \in A$
and consider reducing $\gamma w$ to normal form.

\begin{lemma}
\label{lem:sylvesterlrrewrite}
After $k$ steps, the reduction of
$\gamma w$ must yield a word $u_k\gamma v_k$ such that
\begin{enumerate}
\item $w = u_kv_k$;
\item $\gamma$ is greater than every symbol of $u_k$;
\item if a rule $cavb \imreduces_{\rel{R}} acvb$ applies to $u_k\gamma
  v_k$, then $c$ is the distinguished symbol $\gamma$, the first
  symbol of $v_k$ is $a$, and some other symbol of $v_k$ is $b$.
\end{enumerate}
\end{lemma}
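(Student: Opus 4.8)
The plan is to establish the three conditions together by induction on the number $k$ of reduction steps, after first observing that conditions (1) and (2) pin down $u_k$ and $v_k$ completely. Indeed, $w = u_kv_k$ means $u_k$ is a prefix of $w$ and $v_k$ the complementary suffix, so both are factors of the irreducible word $w$ and hence are themselves irreducible. Consequently no rule of $\rel{R}$ can apply within $u_k$ alone or within $v_k$ alone, and any applicable rule must interact with the distinguished symbol $\gamma$. This reduces everything to understanding how an applicable left-hand side can be positioned relative to the single occurrence of $\gamma$ that we are tracking.

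The heart of the argument is a self-contained claim that yields condition (3) directly from (1) and (2): \emph{if $w = uv$ with every symbol of $u$ strictly less than $\gamma$ and $w$ irreducible, then any rule applicable to $u\gamma v$ has $c = \gamma$, with $a$ the first symbol of $v$ and $b$ a later symbol of $v$.} To prove it I would classify where the left-hand side $cavb$ of an applicable rule sits relative to the position of $\gamma$. A block lying wholly inside $u$ or wholly inside $v$ is excluded by irreducibility. If $\gamma$ is the leftmost symbol (so $c = \gamma$) we are done, since then $a$ is the symbol immediately following $\gamma$, i.e.\ the first symbol of $v$, and $b$ is a later symbol of $v$. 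In every other case $c$ lies in $u$, so $c < \gamma$ by condition (2); but if $\gamma$ occupied the $a$ or $b$ position, the rule inequalities $a \leq b < c$ would force the value $\gamma$ at that position to be smaller than $c$, giving $\gamma < c < \gamma$, a contradiction.

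This leaves the straddling case, in which $\gamma$ lies strictly inside the middle factor of $cavb$, and this is the main obstacle. The resolving trick exploits the fact that the middle word of a defining relation $cavb \imreduces acvb$ is arbitrary: deleting the single interior symbol $\gamma$ turns $cavb$ into a shorter word $ca\tilde{v}b$ that is a genuine factor of $w = uv$ and is still a valid left-hand side, since the constraint $a \leq b < c$ is untouched. Hence $w$ would be reducible, contradicting $w \in L$. This rules out the straddling case, completes the claim, and so establishes condition (3) whenever (1) and (2) hold.

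With the claim in hand the induction is routine. For $k = 0$ we take $u_0 = \emptyword$ and $v_0 = w$; conditions (1) and (2) hold trivially and (3) is the claim with $u$ empty. For the inductive step, condition (3) at stage $k$ determines the unique effect of a reduction step: it rewrites $u_k\gamma a v_{k+1}$ to $u_k a\gamma v_{k+1}$, moving $\gamma$ rightward past the first symbol $a$ of $v_k$, where $v_{k+1}$ is $v_k$ with its first symbol removed (different choices of witness $b$ give the same result, as the rewrite merely transposes the leading $\gamma$ and $a$). Setting $u_{k+1} = u_k a$ gives $u_{k+1}v_{k+1} = u_kv_k = w$, which is condition (1); condition (2) follows since $a \leq b < \gamma$ forces $a < \gamma$; and condition (3) at stage $k+1$ is again the claim applied to the factorization $w = u_{k+1}v_{k+1}$.
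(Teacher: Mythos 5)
Your proof is correct and takes essentially the same approach as the paper's: the same induction on $k$, with condition (3) deduced from conditions (1) and (2) via a case analysis on how an occurrence of $cavb$ can sit relative to the tracked symbol $\gamma$, including the same key trick of deleting the interior $\gamma$ to exhibit a left-hand side $ca\tilde{v}b$ inside the irreducible word $w$ in the straddling case (the paper's ``$cav'b$ is a subword of $w$ for some $v'$''). Your only departure is organizational --- you isolate the implication $(1)\wedge(2)\Rightarrow(3)$ as a standalone claim applied uniformly at the base case and at each inductive step, where the paper argues it inline --- which slightly streamlines the paper's terse base case but is the same argument.
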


\begin{proof}
The proof is by induction on $k$. Let $u_0 = \emptyword$ and $v_0 =
w$. Then for $k=0$, conditions~1 and~2 hold immediately. Since $v_k =
w$ is irreducible, if any rule in $\rel{R}$ applies to $u_k\gamma v_k
= \gamma w$, it must apply as condition~3 specifies. This proves the
base case of the induction.

Now assume that the result holds for $k = \ell -1$; we aim to show it
holds for $k = \ell$. So after $\ell-1$ steps, reduction of $\gamma w$
yields $u_{\ell-1}\gamma v_{\ell-1}$ with conditions~1--3 being
satisfied for $k = \ell-1$. We will use the fact that conditions 1--3
hold for $k=\ell-1$ to prove conditions~1 and~2 for $k = \ell$;
condition~3 for $k=\ell$ then follows from conditions~1 and~2 for
$k=\ell$.

If $u_{\ell-1}\gamma v_{\ell-1}$ is irreducible, no further reduction
takes place and there is nothing more to prove. So suppose
$u_{\ell-1}\gamma v_{\ell-1}$ is not irreducible. Then by condition~3
for $k = \ell-1$ we can let $\alpha \in A$ and $v_{\ell} \in A^*$ be
such that $v_{\ell-1} = \alpha v_{\ell}$ and any rule in $\rel{R}$
that applies to $u_{\ell-1}\gamma v_{\ell-1}$ has $c = \gamma$, $a =
\alpha$, and $b$ being some symbol in $v_{\ell}$. Note that by the
definition of $\rel{R}$, we have $c > a$ and so $\gamma >
\alpha$. Applying this rule yields $u_{\ell-1}\alpha\gamma
v_{\ell}$. Let $u_\ell = u_{\ell-1}\alpha$.

By condition~1 for $k = \ell-1$, we have $w = u_{\ell-1}v_{\ell-1}$
and so $w = u_{\ell-1}\alpha v_\ell = u_\ell v_\ell$; this establishes
condition~1 for $k = \ell$.

By condition~2 for $k = \ell-1$, we know $\gamma$ is greater than
every symbol of $u_{\ell-1}$. Since $\gamma > \alpha$, we deduce that
$\gamma$ is greater than every symbol of $u_\ell$; this proves
condition~2 for $k=\ell$.

Finally, suppose some rule $cavb \imreduces_{\rel{R}} acvb$ (where $a
\leq b < c$) applies to $u_\ell\gamma v_\ell$. If $b$ lies in
$u_\ell$, then $cavb$ is a subword $u_\ell$ and thus of $w$, which
contradicts $w$ being irreducible. If $b$ is the distinguished letter
$\gamma$, then $c$ lies in $u_\ell$, which contradicts condition~2 for
$k = \ell$. Hence $b$ must be some symbol of $v_\ell$. If $ca$ is a
subword of either $u_\ell$ or $v_\ell$, then $cav'b$ is a subword of
$w = u_\ell v_\ell$ for some $v'$, contradicting the irreducibility of
$w$. If $a$ is the distinguished letter $\gamma$, then $c$ lies in
$u_k$, again contradicting the condition~2 for $k = \ell$. The only
remaining possibility is that $c$ is the distinguished symbol
$\gamma$, with $a$ being the first symbol of $v_k$. This proves
condition~3 for $k = \ell$.
\end{proof}

Let $w \in L$ and $\gamma \in A$. By
\fullref{Lemma}{lem:sylvesterlrrewrite}, rewriting $\gamma w$ to an
irreducible word consists of moving $\gamma$ to the right by applying
rules $cavb \imreduces_{\rel{R}} acvb$. Let us consider the symbols in
$\gamma w$ that play the role of $b$ in these rules. Let $\beta_1$ be
the first symbol playing this role. Apply rules involving $\beta_1$ as
many times as possibly, say $k_1$ times. This yields a word
$u_{k_1}\gamma v_{k_1}$. Let $\alpha_{k_1}$ be the first symbol of
$v_{k_1}$. Suppose this word $u_{k_1}\gamma v_{k_1}$ is not
irreducible. So some rule from $\rel{R}$ applies, with some symbol
$\beta_2 \neq \beta_1$ playing the role of $b$. Then $\alpha_{k_1} >
\beta_1$ (since otherwise a rule from $\rel{R}$ would apply with $c =
\gamma$, $a = \alpha_{k_1}$, and $b = \beta_1$). Since a rule applies with
$c = \gamma$, $a = \alpha_{k_1}$, and $b = \beta_2$, we have $\beta_2
\geq \alpha_{k_1} > \beta_1$. Apply rules involving $\beta_2$ as many
times as possible, say $k_2$ times, yielding $u_{k_2}\gamma
v_{k_2}$. Either this word is irreducible or, via the reasoning above,
it can be reduced by a rule in $\rel{R}$ with $b$ being $\beta_3 >
\beta_2$. Repeating this reasoning, we set a sequence $\beta_1 <
\beta_2 < \ldots$ which must terminate at some $\beta_\ell$ with an
irreducible word $u_{k_\ell}\gamma v_{k_\ell}$ since the alphabet $A$
is finite.

Let
\begin{align*}
H_b = \bigl\{(pcqrbs,pqcrbs):{}&p,r,s \in A^*, \\
&b,c \in A,\\
&c > b,\\
&q \in \{a \in A : a \leq b\}^+ \bigr\}.
\end{align*}
Clearly, $(pcqrbs,pqcrbs) \in H_b$ if and only if $pcqrbs$ reduces to
$pqcrbs$ using only rules from $\rel{R}$ applied to the distinguished
letters $c$ and $b$ with $a$ being the successive letters of
$q$. Hence, by the reasoning in the preceding paragraph, and using the
same notation,
\[
(\gamma w,u_{k_\ell}\gamma v_{k_\ell}) \in H_{\beta_1} \circ H_{\beta_2} \circ \cdots \circ H_{\beta_\ell}.
\]
Therefore,
\[
(w,x) \in {}_\gamma L \iff (\gamma w,x) \in \bigl((\gamma L) \times L\bigr) \cap \bigcup_{\ell=0}^n \;\;\bigcup_{\substack{\beta_1, \ldots,\beta_\ell\in A\\\beta_1 < \ldots <\beta_\ell}} H_{\beta_1} \circ \cdots \circ H_{\beta_\ell},
\]
or equivalently
\[
(\gamma,\emptyword){}_\gamma L = \bigl((\gamma L) \times L\bigr) \cap \bigcup_{\ell=0}^n \;\;\bigcup_{\substack{\beta_1, \ldots,\beta_\ell\in A\\\beta_1 < \ldots <\beta_\ell}} H_{\beta_1} \circ \cdots \circ H_{\beta_\ell}.
\]
(Note that the intersection with $(\gamma L) \times L$ is necessary
because $H_b$ also relates pairs of words that are not in this set.)

It is easy to see that $H_b$ is a rational relation, since a
transducer recognizing it only needs to store the symbol $c$ in its
state, check that the other symbols on the two tapes match, and that
the contents of the two tapes are of the required form. Hence
$(\gamma,\emptyword){}_\gamma L$ is a rational relation and so
${}_\gamma L$ is a rational relation. Since $(w,x) \in {}_\gamma L$
implies $|x| = |w| + 1$, it follows from
\fullref{Proposition}{prop:rationalbounded} that ${}_\gamma L\rpad$
and ${}_\gamma L\lpad$ are regular.

\subsubsection{Right-multiplication}
\label{sec:sylvesterright}

We now turn to right multiplication. By analogy with the hypoplactic
monoid, we will call any strictly decreasing word in $A^*$ a
\defterm{column}. Notice that since $A$ is finite, there are only
finitely many distinct columns. 


\begin{lemma}
\label{lem:norepeatedcolumns}
Let $w \in L$, and let $w = \alpha^{(1)}\cdots\alpha^{(k)}$ be the
decomposition of $w$ into maximal columns. If $\alpha^{(i)} =
\alpha^{(i+h)}$ with $h \geq 1$, then $|\alpha^{(i)}| = |\alpha^{(i+h)}| = 1$.
\end{lemma}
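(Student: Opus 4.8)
The plan is to argue by contradiction, using the explicit description of $L$ as the set of words containing no factor of the form $cavb$ with $a \leq b < c$ and $v \in A^*$, which is exactly the language of irreducible words furnished by \fullref{Proposition}{prop:sylvesterfcrs}. Since every column appearing in the decomposition is nonempty, and since $\alpha^{(i)} = \alpha^{(i+h)}$ forces $|\alpha^{(i)}| = |\alpha^{(i+h)}|$, it suffices to rule out $|\alpha^{(i)}| \geq 2$. So first I would suppose, for a contradiction, that $|\alpha^{(i)}| \geq 2$ and write $\alpha^{(i)} = c_1 c_2 \cdots c_m$ with $c_1 > c_2 > \cdots > c_m$ and $m \geq 2$ (the strict inequalities hold because $\alpha^{(i)}$ is a column).

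The key observation is that the hypothesis $\alpha^{(i)} = \alpha^{(i+h)}$ with $h \geq 1$ guarantees a second occurrence of the letter $c_2$ strictly to the right of the first two letters $c_1 c_2$ of $\alpha^{(i)}$, namely the second letter of the later column $\alpha^{(i+h)}$. I would then set $c = c_1$, $a = c_2$, $b = c_2$, and let $v \in A^*$ be the factor of $w$ lying strictly between these two occurrences of $c_2$ (this factor consists of $c_3 \cdots c_m$, the intervening columns $\alpha^{(i+1)} \cdots \alpha^{(i+h-1)}$, and the leading letter $c_1$ of $\alpha^{(i+h)}$). With these choices, $w$ contains the contiguous factor $c_1 c_2 v c_2 = cavb$. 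The required inequalities hold: $a = c_2 \leq c_2 = b$ trivially, and $b = c_2 < c_1 = c$ because $\alpha^{(i)}$ is strictly decreasing. Hence this factor is a left-hand side of a rule in $\rel{R}$, so $w$ is reducible.

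This contradicts $w \in L$, and therefore $|\alpha^{(i)}| = 1$; equality of the two columns then gives $|\alpha^{(i+h)}| = 1$ as well. I expect no serious obstacle here: the only points requiring care are checking that the exhibited occurrence of $c_2 v c_2$ is genuinely a contiguous factor of $w$ (which follows since $\alpha^{(i+h)}$ lies entirely to the right of $\alpha^{(i)}$) and that the letters chosen satisfy $a \leq b < c$. The one small trick, which is really the heart of the argument, is to use the \emph{repeated} second letter $c_2$ simultaneously in the roles of $a$ and of $b$, turning the forbidden-factor condition defining $L$ directly against the assumed repetition of a column of length at least two. Note that maximality of the columns is not needed for this implication; only the fact that each $\alpha^{(j)}$ is strictly decreasing and that $\alpha^{(i+h)}$ occurs later is used.
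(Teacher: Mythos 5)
Your proof is correct and takes essentially the same approach as the paper: both exhibit a forbidden factor $cavb$ in $w$ by letting an adjacent pair within $\alpha^{(i)}$ play the roles of $c$ and $a$ and using the repetition of that same letter in $\alpha^{(i+h)}$ as $b$. The only (immaterial) difference is that the paper uses the \emph{rightmost} two symbols of $\alpha^{(i)}$ with $b$ the last letter of $\alpha^{(i+h)}$, whereas you use the first two symbols with $b$ the second letter of $\alpha^{(i+h)}$.
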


\begin{proof}
Suppose that $\alpha^{(i)} = \alpha^{(i+h)}$. Suppose, with the aim of
obtaining a contradiction, that $|\alpha^{(i)}| \geq 2$. Suppose
$\alpha^{(i)} = \alpha'ca$, where $c,a \in A$ and $\alpha' \in
A^*$. Let $b = a$. Then $c > a$ (since $\alpha^{(i)}$ is a decreasing
word) and so the condition $a \leq b < c$ holds. Thus the rewriting
rule $cavb \imreduces_{\rel{R}} acvb$ applies with $c,a$ being the
rightmost two symbols in $\alpha^{(i)}$ and $b$ being the rightmost
symbol in $\alpha^{(i+h)}$ (which is, by hypothesis, equal to
$\alpha^{(i)}$). Hence $w$ is not irreducible, which contradicts $w$
lying in $L$
\end{proof}

We will analyze the reduction that can occur when we right-multiply a
normal form word by a single generator and then rewrite back to a
normal form word. Let $w \in L$ and $\gamma \in A$ and consider
reducing $w\gamma$ to normal form.  Suppose $w = \alpha_1\cdots
\alpha_{|w|}$ for $\alpha_i \in A$. Let $G = \{i \in \{1,\ldots,|w|\} : \alpha_i \leq
\gamma\}$ and consider the word
\[
x = \Bigl[\prod_{\substack{1 \leq i \leq |w|\\i \in G}}\alpha_i\Bigr]\Bigl[\prod_{\substack{1 \leq j \leq |w|\\j \notin G}}\alpha_j\Bigr]\gamma.
\]

\begin{lemma}
\label{lem:wgammareduce}
The word $x$ is irreducible with respect to $\rel{R}$, and $w\gamma
\reduces_{\rel{R}} x$. Furthermore, rewriting of $w\gamma$ to $x$
only requires applying rules from $\rel{R}$ with $b = \gamma$.
\end{lemma}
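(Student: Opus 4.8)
The plan is to handle parts~2 and~3 together by exhibiting one explicit reduction, and then to obtain part~1 from an auxiliary irreducibility claim. For parts~2 and~3, the key observation is that the appended letter $\gamma$ can serve \emph{permanently} as the witness $b$. A rule $cavb \imreduces_{\rel{R}} acvb$ with $b=\gamma$ (so $a \le \gamma < c$) applied with $\gamma$ the final letter of the current word simply swaps an adjacent factor $ca$, where $c > \gamma \ge a$, into $ac$ and leaves everything else fixed. So, starting from $w\gamma$, I would repeatedly locate an adjacent factor $ca$ occurring before the final $\gamma$ with $c > \gamma \ge a$ and apply the corresponding rule, taking $v$ to be the intervening factor and $b = \gamma$. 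Each step moves a letter exceeding $\gamma$ one place rightward past a letter not exceeding $\gamma$, hence strictly decreases the number of position-pairs at which a letter $>\gamma$ precedes a letter $\le\gamma$; this monovariant forces termination. When the process stops, no such adjacent factor remains, so in the terminal word every letter $\le\gamma$ precedes every letter $>\gamma$; since the swaps never exchange two letters of the same class, the relative order within each class is preserved, and the terminal word is exactly $\bigl[\prod_{i\in G}\alpha_i\bigr]\bigl[\prod_{j\notin G}\alpha_j\bigr]\gamma = x$. This establishes $w\gamma \reduces_{\rel{R}} x$ using only rules with $b=\gamma$, settling parts~2 and~3.

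For part~1 it remains to show $x$ is irreducible. Write $u = \prod_{i\in G}\alpha_i$ (the subword of letters $\le\gamma$) and $v = \prod_{j\notin G}\alpha_j$ (the subword of letters $>\gamma$), so $x = uv\gamma$. The crucial sub-claim is that \emph{$u$ and $v$ are each irreducible}, and I expect this to be the main obstacle: a forbidden factor in a subword need not be adjacent in $w$, since letters of the other class may separate $c$ and $a$ in $w$. I would argue contrapositively. Suppose $u$ is reducible, witnessed by letters $c,a$ consecutive in $u$ at positions $i<i'$ of $w$ (only letters $>\gamma$ strictly between them) and a later letter $b$ of $u$ at position $i''>i'$ with $a \le b < c \le \gamma$. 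If $i'=i+1$, then $ca$ is already an adjacent descent of $w$ with the same witness $b$, so $w$ is reducible, contradicting $w\in L$. If $i'>i+1$, then $w_{i'-1}>\gamma \ge a = w_{i'}$ is an adjacent descent of $w$ at position $i'-1$, and since $a = w_{i'} \le b < c \le \gamma < w_{i'-1}$ with $i''>i'$, the letter $b$ still witnesses reducibility of $w$ there, again a contradiction. The symmetric argument, using the descent $w_i = c > \gamma \ge w_{i+1}$ at the left end of the gap, handles $v$. In all cases reducibility of a subword forces reducibility of $w$.

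Granting the sub-claim, part~1 follows cleanly via binary search trees. Because $\gamma$ is the last letter of $w\gamma$ and $\bst$ inserts right to left, $\gamma$ is the root of $\bst(w\gamma)$, its left subtree is $\bst(u)$, and its right subtree is $\bst(v)$; hence $LRP(\bst(w\gamma)) = LRP(\bst(u))\,LRP(\bst(v))\,\gamma$. Since $u$ and $v$ are irreducible, they are the left-to-right postfix readings of their own trees, so $LRP(\bst(u))=u$ and $LRP(\bst(v))=v$ by \fullref{Proposition}{prop:sylvesterfcrs} together with the identity $\bst(LRP(T))=T$. Therefore $x = uv\gamma = LRP(\bst(w\gamma))$, which is irreducible by \fullref{Proposition}{prop:sylvesterfcrs}. (Alternatively one can verify irreducibility of $x$ directly from its adjacent descents: descents inside the $u$-block and inside the $v$-block are harmless by the sub-claim, since the remaining letters to their right are respectively too large or too small to witness a rule; the $u$--$v$ junction is an ascent; and the final descent onto $\gamma$ has no letter to its right.)
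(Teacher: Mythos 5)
Your proof is correct, and while the reduction half (parts~2 and~3) matches the paper's --- the paper simply asserts that rules with $b=\gamma$ move all letters $\leq\gamma$ leftward, whereas you add an explicit inversion-counting monovariant for termination --- your irreducibility argument takes a genuinely different route. The paper verifies irreducibility of $x$ by a self-contained case analysis on a hypothetical left-hand side $cavb$ in $x$: it first excludes $b=\gamma$ and the cross-block configurations, then handles the two within-block cases by passing to the nearest letter of the opposite class adjacent in $w$ (taking $c'=\alpha_{i+h-1}$ or $a'=\alpha_{j+1}$) to manufacture a left-hand side in $w$ itself. Your sub-claim that $u$ and $v$ are individually irreducible is proved by exactly this same nearest-intermediate-letter trick, so the combinatorial core is shared; but you then dispose of the cross-block cases for free by invoking the binary search tree machinery: $\bst(w\gamma)$ has root $\gamma$ with subtrees $\bst(u)$ and $\bst(v)$, so $LRP(\bst(w\gamma)) = uv\gamma = x$ is lexicographically minimal and hence irreducible by \fullref{Propositions}{prop:lrpbstlexmin} and \ref{prop:sylvesterfcrs} --- in effect replaying the inductive step inside the paper's own proof of \fullref{Proposition}{prop:sylvesterfcrs}. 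Since your lemma sits after that proposition, there is no circularity. What each approach buys: the paper's argument is self-contained within the rewriting framework and never mentions trees; yours reuses established structure, explains conceptually \emph{why} $x$ has the two-block form (it is the postfix reading of the tree obtained by inserting $\gamma$ first), and shortens the case analysis --- at the mild cost of leaning on the external characterization of normal forms. Your parenthetical direct verification is also sound and essentially reconstructs the paper's exclusions.
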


\begin{proof}
First of all, notice that the rules in $\rel{R}$ apply with $b =
\gamma$ to move all letters less than or equal to $\gamma$ to the left
of those strictly greater than $\gamma$. So $w\gamma$ certainly
rewrites to $x$ in the given way; it remains to show that $x$ is
irreducible.

Suppose, with the aim of obtaining a contradiction, that some rule in
$\rel{R}$ applies to $x$. Then $x$
contains a subword $cavb$ for some $a \leq b < c$ and $v \in
A^*$.

Now, it is impossible to have $b = \gamma$, for this implies $a =
\alpha_i$ for some $i \in G$ and $c = \alpha_j$ for some $j \notin G$,
which in turn implies that $a$ appears to the left of $c$ in $x$,
which contradicts the form of the subword $cavb$.

It is also impossible to have $c = \alpha_i$ for $i \in G$ and $b =
\alpha_j$ for $j \notin G$, for then (by definition of $G$) $c \leq
\gamma < b$, contradicting the form of the subword $cavb$.

So $c$ and $b$ (and hence the whole subword $cavb$) must either both
lie within the product of the $\alpha_i$ with $i \in G$, or both lie
within the product of the $\alpha_j$ with $j \notin G$. In either
case, if the letters $ca$ were adjacent in the original word $w$, it
would not have been irreducible, since the rewriting of $w\gamma$ to
$x$ using $\rel{R}$ preserves the order in which the
$\alpha_i$ (with $i \in G$) appear and the order in which the
$\alpha_j$ (with $j \notin G$) appear.

Consider first the case that $cavb$ lies wholly within the product
$\alpha_i$ with $i \in G$. Then, since $ca$ were not adjacent in the
original word $w$, we have $c = \alpha_i$ and $a = \alpha_{i+h}$ with
$i,i+h \in G$ and $i+1,\ldots,i+h-1 \notin G$ for some $h \geq 2$. Let
$c' = \alpha_{i+h-1}$. Then, by definition of $G$, we have $a \leq b
< c'$ and so there is a left-hand side of a rule in $\rel{R}$ in
$w$, contradicting $w$ being irreducible.

Now consider the second case, where $cavb$ lies wholly within the product
$\alpha_j$ with $j \notin G$. Then, since $ca$ were not adjacent in the
original word $w$, we have $c = \alpha_j$ and $a = \alpha_{j+h}$ with
$j,j+h \notin G$ and $j+1,\ldots,j+h-1 \in G$ for some $h \geq 2$. Let
$a' = \alpha_{j+1}$. Then, by definition of $G$, we have $a' \leq b
< c$ and so there is a left-hand side of a rule in $\rel{R}$ in
$w$, contradicting $w$ being irreducible.

So each case leads to a contradiction. Hence $x$ is
irreducible.
\end{proof}

Recall that a word $w \in L$ admits a decomposition $w =
\alpha^{(1)}\alpha^{(2)}\cdots\alpha^{(k)}$ into maximal columns. Let
$I$ be the set of indices $i$ such that the letter $\alpha_i$ of $w$
is itself one of the columns $\alpha^{(j)}$. That is, $\alpha_i$ does
not lie in a column $\alpha^{(j)}$ containing two or more symbols from
$A$. Since the length of columns is bounded by $n$, there is a bounded
number of possible columns of length at least $2$. Each of these
columns appears at most once in $w$ by
\fullref{Lemma}{lem:norepeatedcolumns}. Since each of these columns
has length at most $n$, there is a bound $M$ (dependent only on $n$)
on the number of indices not in $I$.

\begin{lemma}
For all $i,i+h \in I$, we have $\alpha_i \leq \alpha_{i+h}$.
\end{lemma}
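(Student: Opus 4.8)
The plan is to argue by contradiction, after first translating both hypotheses into elementary conditions on the individual letters $\alpha_1,\ldots,\alpha_{|w|}$ of $w$. Since $w \in L$ is irreducible, by the form of $\rel{R}$ it contains no subword $cavb$ with $a \le b < c$; spelling this out in terms of positions, there is no index $p$ and index $q > p+1$ with $\alpha_{p+1} \le \alpha_q < \alpha_p$ (taking $c = \alpha_p$, $a = \alpha_{p+1}$ the letter immediately following it, and $b = \alpha_q$). On the other side, $i+h \in I$ means that $\alpha_{i+h}$ is a maximal column all by itself. As columns are maximal strictly-decreasing runs and $i+h \ge 2$ (because $i \ge 1$ and $h \ge 1$), the left boundary of this singleton column gives the inequality $\alpha_{i+h-1} \le \alpha_{i+h}$, which is the one fact about $I$ I will actually need.

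Now suppose, for contradiction, that $\alpha_i > \alpha_{i+h}$, and write $c = \alpha_i$ and $b = \alpha_{i+h}$, so that $b < c$. I would produce a forbidden subword $cavb$ by locating a suitable adjacent pair $\alpha_p\alpha_{p+1}$ lying strictly before position $i+h$ and using the occurrence of $b$ at position $i+h$ as the trailing letter. Concretely, take $p$ to be the largest index in $\{i,\ldots,i+h-1\}$ with $\alpha_p > b$; this set is non-empty since $\alpha_i = c > b$, so $p$ is well defined and $\alpha_p > b$.

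The crux — the one step where the hypothesis $i+h \in I$ is genuinely used — is ruling out the boundary case $p = i+h-1$, in which there would be no room left for an occurrence of $b$ strictly after position $p+1$. But $p = i+h-1$ would force $\alpha_{i+h-1} = \alpha_p > b = \alpha_{i+h}$, directly contradicting the singleton-column inequality $\alpha_{i+h-1} \le \alpha_{i+h}$ established in the first paragraph. Hence $p \le i+h-2$, so $p+1 \le i+h-1$ and the occurrence of $b$ at position $i+h$ lies strictly beyond position $p+1$; moreover, by the maximality of $p$, the letter at position $p+1$ satisfies $\alpha_{p+1} \le b$. Therefore $\alpha_p\,\alpha_{p+1}\cdots\alpha_{i+h}$ is a subword of $w$ of the shape $cavb$ with $a = \alpha_{p+1} \le b < c = \alpha_p$ and $v = \alpha_{p+2}\cdots\alpha_{i+h-1}$, which contradicts the irreducibility of $w$. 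This forces $\alpha_i \le \alpha_{i+h}$, as required. I expect no further obstacles; indeed the argument shows slightly more than stated, since only $i+h \in I$ is used and the membership $i \in I$ is not needed.
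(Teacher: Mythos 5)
Your proof is correct and follows essentially the same route as the paper's: both arguments use only $i+h \in I$ to get the singleton-column inequality $\alpha_{i+h-1} \leq \alpha_{i+h}$, then locate an index where the letters $\alpha_i,\ldots,\alpha_{i+h-1}$ cross from being greater than $\alpha_{i+h}$ to being at most $\alpha_{i+h}$ (the paper via an intermediate-value style choice of $g$, you via the maximal $p$ with $\alpha_p > b$, which is the same crossing), and exhibit a forbidden subword $cavb$ with $b = \alpha_{i+h}$ contradicting the irreducibility of $w$.
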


\begin{proof}
Suppose, with the aim of obtaining a contradiction, that for some
$i,i+h \in I$, we have $\alpha_i > \alpha_{i+h}$. Since $\alpha_{i+h}$
is a column (a maximal strictly decreasing subword of), we have $\alpha_{i+h-1}
\leq \alpha_{i+h}$. In particular, $h \geq 2$. Since $\alpha_i >
\alpha_{i+h} \geq \alpha_{i+h-1}$, the sequence
$\alpha_i,\ldots,\alpha_{i+h-1}$ starts greater than $\alpha_{i+h}$
and ends less than or equal to $\alpha_{i+h}$. So there must be some
$g \in \{0,\ldots,h-2\}$ such that $\alpha_{i+g} > \alpha_{i+h} \geq
\alpha_{i+g+1}$. Hence a rule from $\rel{R}$ applies to $w$ with $c =
\alpha_{i+g}$, $a=\alpha_{i+g+1}$, and $b = \alpha_{i+h}$, which
contradicts $w$ being irreducible.
\end{proof}

Thus, the rewriting of $w\gamma$ to $x$ does not alter
the relative positions of symbols $\alpha_i$ with $i \in I$. So only
symbols with subscripts not in $I$ have to be moved rightwards using
rules in $\rel{R}$ with $b= \gamma$ in order to rewrite $w\gamma$ to
$x$. That is, at most $M$ symbols, each greater than $\gamma$, must be
moved to the right of all symbols less or equal to $\gamma$ (excepting
$\gamma$ itself).

Since the symbol $\gamma$ is not moved during this rewriting, we will
consider the relation
\[
L'_\gamma = \{(w,x') : w \in L, w \reduces_{\rel{R}} x'\gamma \in L\},
\]
Notice that $L_\gamma = L'_\gamma(\emptyword,\gamma)$, and so if a
synchronous transducer recognizes $L'_\gamma$, it recognizes $L_\gamma$. 

Let
\begin{align*}
J_\gamma = \bigl\{(pcqr,pqcr):{}&p \in A^* \\
&q \in \{a \in A : a \leq \gamma\}^+ \\
&r \in \{a \in A : a > \gamma\}^* \\
&c \in A, c > \gamma\bigr\}.
\end{align*}
Notice that if $(pcqr,pqcr) \in J_\gamma$, then $pqcr\gamma$ is the
word obtained from $pcqr\gamma$ by applying rewriting rules $cavb
\imreduces_\rel{R} acvb$ with $\gamma$ being $b$ and successive
letters from $q$ being $a$. Thus if we start with our word $w$ and
apply $J_\gamma$, we move the rightmost letter $c$ that is greater
than $\gamma$ but which lies to the left of some letter less than or
equal to $\gamma$ into its proper place. Iterating this process will
therefore yield $x'$. Since there are at most $M$ symbols that have to
be moved rightwards to their proper places to obtain $x'$ from $w$, at
most $M$ iterations are required. Therefore
\[
L'_\gamma = (L \times (L/\gamma)) \cap \bigcup_{\ell = 1}^M \underbrace{J_\gamma\circ\cdots\circ J_\gamma}_{\text{$\ell$ times}}
\]
(Recall that $L/\gamma = \{w \in A^* : w\gamma \in L\}$. Note that we have to
take the intersection with $L \times (L/\gamma)$ because
$J_\gamma\circ\cdots\circ J_\gamma$ may contains pairs $(y,z)$ where
$y$ and $z\gamma$ are not irreducible.)

It is easy to see that $J_\gamma$ is a rational relation, since a
transducer recognizing it only needs to store the symbol $c$ in its
state, check that the other symbols on the two tapes match, and that
the contents of the two tapes are of the required form. Hence
$L'_\gamma$ is a rational relation and so $L_\gamma$ is a rational
relation. Since $(w,x) \in L_\gamma$ implies $|x| = |w| + 1$, it
follows from \fullref{Proposition}{prop:rationalbounded} that
$L_\gamma\rpad$ and $L_\gamma\lpad$ are regular.

In the previous section, we proved that ${}_\gamma L\rpad$ and
${}_\gamma L\lpad$ are regular. Thus we have proved:

\begin{theorem}
$(A,L)$ is a biautomatic structure for the sylvester monoid $S_n$.
\end{theorem}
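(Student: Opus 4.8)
The plan is to recognize that this final theorem is essentially a collation of what has already been proved in the two preceding subsections, together with the trivial treatment of the empty generator. Recall from \fullref{Definition}{def:autstruct} that, in the strong sense of biautomaticity used throughout this paper, proving that $(A,L)$ is a biautomatic structure requires showing that $L_a\rpad$, ${}_aL\rpad$, $L_a\lpad$, and ${}_aL\lpad$ are all regular for \emph{every} $a \in A \cup \{\emptyword\}$; and we already know that $L$ is a regular language over $A$ that maps onto $S_n$, with each element of $S_n$ having exactly one representative in $L$ by \fullref{Proposition}{prop:sylvesterfcrs}.

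For $a = \gamma \in A$, all four padded relations have already been shown to be regular: ${}_\gamma L\rpad$ and ${}_\gamma L\lpad$ at the end of \fullref{\S}{sec:sylvesterleft}, and $L_\gamma\rpad$ and $L_\gamma\lpad$ at the end of \fullref{\S}{sec:sylvesterright}. So nothing further is needed for the generators, and the substantive content of the result lies entirely in those earlier transducer constructions. It then remains only to dispose of the case $a = \emptyword$. Here I would use the fact that, since $L$ is the set of irreducible words of the complete rewriting system $(A,\rel{R})$, it is a cross-section of $S_n$; hence for $u,v \in L$ the condition $u =_{S_n} v$ forces $u = v$ as words. Therefore both $L_\emptyword$ and ${}_\emptyword L$ coincide with the diagonal relation $\{(u,u) : u \in L\}$, and applying either $\rpad$ or $\lpad$ to this diagonal produces the image of the regular language $L$ under the letter-to-letter coding $b \mapsto (b,b)$, which is regular.

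Putting these observations together, all four families of padded relations are regular for every $a \in A \cup \{\emptyword\}$, so the conditions of \fullref{Definition}{def:autstruct} are satisfied and $(A,L)$ is a biautomatic structure for $S_n$. I do not anticipate any genuine obstacle at this stage: the only points requiring care are to verify that every one of the four required relations has in fact been covered by the earlier arguments (both left and right multiplication under both paddings, as demanded by the strong notion of biautomaticity), and to handle the empty-generator case separately, which is immediate from $L$ being a cross-section.
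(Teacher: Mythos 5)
Your proposal is correct and matches the paper's approach: the paper likewise derives the theorem as an immediate collation of the regularity of $L_\gamma\rpad$, $L_\gamma\lpad$, ${}_\gamma L\rpad$, and ${}_\gamma L\lpad$ established in the two preceding subsections. Your explicit treatment of the case $a = \emptyword$ via the cross-section property (so $L_\emptyword$ and ${}_\emptyword L$ are the diagonal on $L$) is a small but sound addition that the paper leaves implicit.
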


\bibliography{automaticsemigroups,languages,presentations,semigroups,\jobname,c_publications}
\bibliographystyle{alphaabbrv}

\end{document}